\numberwithin{equation}{section}
\definecolor{c1}{rgb}{0,0,1} 
\newtheorem{thm}{Theorem}
\newtheorem{lemma}{Lemma}
\numberwithin{equation}{section}
\numberwithin{thm}{section}
\numberwithin{lemma}{section}
\numberwithin{cor}{section}
\numberwithin{prop}{section}
\begin{document}

{
  \title{\bf Nonlocal correlation in L-functions}
  \author{Gordon Chavez}

\date{}
  \maketitle
}

\begin{abstract}
We identify a nonlocal correlation structure in L-functions. This structure involves very long and infinite-range correlations between values of logarithmic L-functions, where the correlation strongly depends upon the presence of a multiplicative relationship between the two points in question on the complex plane. This correlation sharply jumps if the two points share a multiplicative relationship and takes much lower values otherwise, demonstrating a previously undescribed type of long-range order in L-functions. We leverage this correlation structure to provide a novel set of identities that relate the distribution of M{\"o}bius function solutions, sums over the non-divisors of the integers, and the polylogarithms. 
\end{abstract}

MSC 2020 subject classification: 11A25, 11M06, 11N25

\tableofcontents

\section{Introduction}
The famous Riemann zeta function $\zeta(s)$ may be represented by the following sum over integers $n$ or product over prime numbers $p$ for $\textnormal{Re}(s)>1$:
\begin{equation}
\zeta(s)=\sum_{n=1}^{\infty}\frac{1}{n^{s}}=\prod_{p}\left(1-\frac{1}{p^{s}}\right)^{-1} \label{zeta}
\end{equation}
The former representation is called the Dirichlet series, while the latter representation is called the Euler product. $\zeta(s)$ may be extended by analytic continuation to all $s$ in $\mathbb{C}$ except $s=1$, as $\zeta(1)$ is a simple pole. L-functions $L\left(s,\chi\right)$ are an important generalization of $\zeta(s)$. Like $\zeta(s)$, these functions have a Dirichlet series and Euler product representation
\begin{equation}
L\left(s,\chi\right)=\sum_{n=1}^{\infty}\frac{\chi(n)}{n^{s}}=\prod_{p}\left(1-\frac{\chi(p)}{p^{s}}\right)^{-1}, \label{l function}
\end{equation}
where $\chi$ is a given Dirichlet character modulo $M$, which has the definition
\begin{align}
\chi(nm)=\chi(n)\chi(m), \nonumber \\
\chi(n)\begin{cases} =0; & \textnormal{gcd}(n,M)>1 \\ \neq 0; & \textnormal{gcd}(n,M)=1 \end{cases} \nonumber \\
\chi(n+M)=\chi(n) \label{dirichlet character}
\end{align}
If $\chi$ is a principal character, $\chi_{0}$, meaning it has the simple definition  
\begin{equation}
\chi_{0}(n)=\begin{cases} 0; & \textnormal{gcd}(n,M)>1 \\ 1; & \textnormal{gcd}(n,M)=1 \end{cases} \label{principal ch}
\end{equation}
then, similarly to $\zeta(s)$, $L(s,\chi_{0})$ may be extended by analytic continuation to all $s$ in $\mathbb{C}$ except $s=1$ where there is a simple pole. If $\chi$ is not principal, then $L(s,\chi)$ is an entire function.

The statistical properties of zeta and L-function values have been a subject of great interest since Bohr and Courant \cite{bohr} \cite{bohr courant} showed that, for any $1/2 <\sigma\leq 1$, the set of values taken by $\zeta(\sigma+it)$ as well as $\log\zeta(\sigma+it)$ under $t\in \mathbb{R}$ are dense in $\mathbb{C}$. This showed that the values of $\zeta(s)$ and $\log\zeta(s)$ are, in a precise sense, ergodic in the complex numbers. Decades later, Voronin \cite{voronin71} substantially extended Bohr's denseness result by showing that, for any $1/2<\sigma\leq 1$, the set of values taken by $\zeta(\sigma+it), \zeta'(\sigma+it),..., \zeta^{(m-1)}(\sigma+it)$, are dense in $\mathbb{C}^{m}$. He then proved a related and even more striking result: the universality theorem for the zeta function, which showed that $\zeta(s)$ and $\log\zeta(s)$ will approximate, to arbitrarily high accuracy, any non-vanishing holomorphic function in the critical strip \cite{voronin72}. Voronin \cite{voronin73} along with Gonek \cite{gonek} and Bagchi \cite{bagchi1} \cite{bagchi2} additionally proved that L-functions have the same and even more powerful universality properties, showing that L-functions are ergodic in the space of analytic functions. These denseness and universality results demonstrate that L-functions fluctuate very chaotically in the critical strip and exhibit an enormous variety of functional behavior. In this paper we show that, along with their chaotic fluctuations, there is also significant long-range statistical order in many of these functions. 

Below we describe very long and infinite-range correlations between values of logarithmic principal L-functions, where the correlation strongly depends upon the presence of a multiplicative relationship between the two points in question on the complex plane. In particular, we show that if $\chi_{0}$ is a principal Dirichlet character modulo $M$ and $p$ is the minimum prime $p$ such that $p \nmid M$, then, for two rational numbers $\alpha$ and $\beta$, and under uniformly distributed $t \in [a,b]$ with $b-a\rightarrow \infty$, the correlations 
\begin{equation}
\rho_{\chi_{0}}^{\textnormal{diag}}(\alpha,\beta,\sigma)=\textnormal{Corr}\left\{\log\left|L\left(\alpha\left(\sigma+it\right),\chi_{0}\right)\right|,\log\left|L\left(\beta\left(\sigma+it\right),\chi_{0}\right)\right|\right\} \label{gen diag corr intro}
\end{equation}
and 
\begin{equation}
\rho_{\chi_{0}}^{\textnormal{vert}}(\alpha,\beta,\sigma)=\textnormal{Corr}\left\{\log\left|L\left(\sigma+i\alpha t,\chi_{0}\right)\right|,\log\left|L\left(\sigma+i\beta t,\chi_{0}\right)\right|\right\}, \label{gen vert corr intro}
\end{equation}
which we refer to as \textit{diagonal} and \textit{vertical} correlations respectively, have the properties
\begin{equation}
\rho_{\chi_{0}}^{\textnormal{diag}}(\alpha,\beta,\sigma)=\begin{cases} 
       O\left(p^{-\alpha \sigma}\right) \hspace{.1cm}; & \beta|\alpha \\
      O\left(p^{-\left(2\beta-1\right)\alpha \sigma}\right) \hspace{.1cm}; & \beta \nmid \alpha
   \end{cases}
\label{nonloc diag corr 1}
\end{equation}
and
\begin{equation}
\rho_{\chi_{0}}^{\textnormal{vert}}(\alpha,\beta,\sigma)=\begin{cases} 
      O\left(p^{-\frac{\alpha}{\beta}\sigma}\right) \hspace{.1cm}; & \beta|\alpha \\
      O\left(p^{-\alpha\sigma}\right) \hspace{.1cm}; & \beta \nmid \alpha
   \end{cases} 
\label{nonloc vert corr 1}
\end{equation}
as $\alpha\rightarrow \infty$. This shows that, given two points on the complex plane, there can be high correlation between the values of logarithmic principal L-functions at these points, even if the points are separated by a very large or infinite distance. This correlation will make a sharp jump to much higher values if the two points have a multiplicative relationship and will otherwise take on much smaller values. This behavior is depicted for several vertical correlation functions in Figure \ref{big cor plot intro}. These results thus demonstrate a notable type of long-range order in L-functions that has not previously been described.

\begin{figure}
\centering
\includegraphics[width=\linewidth]{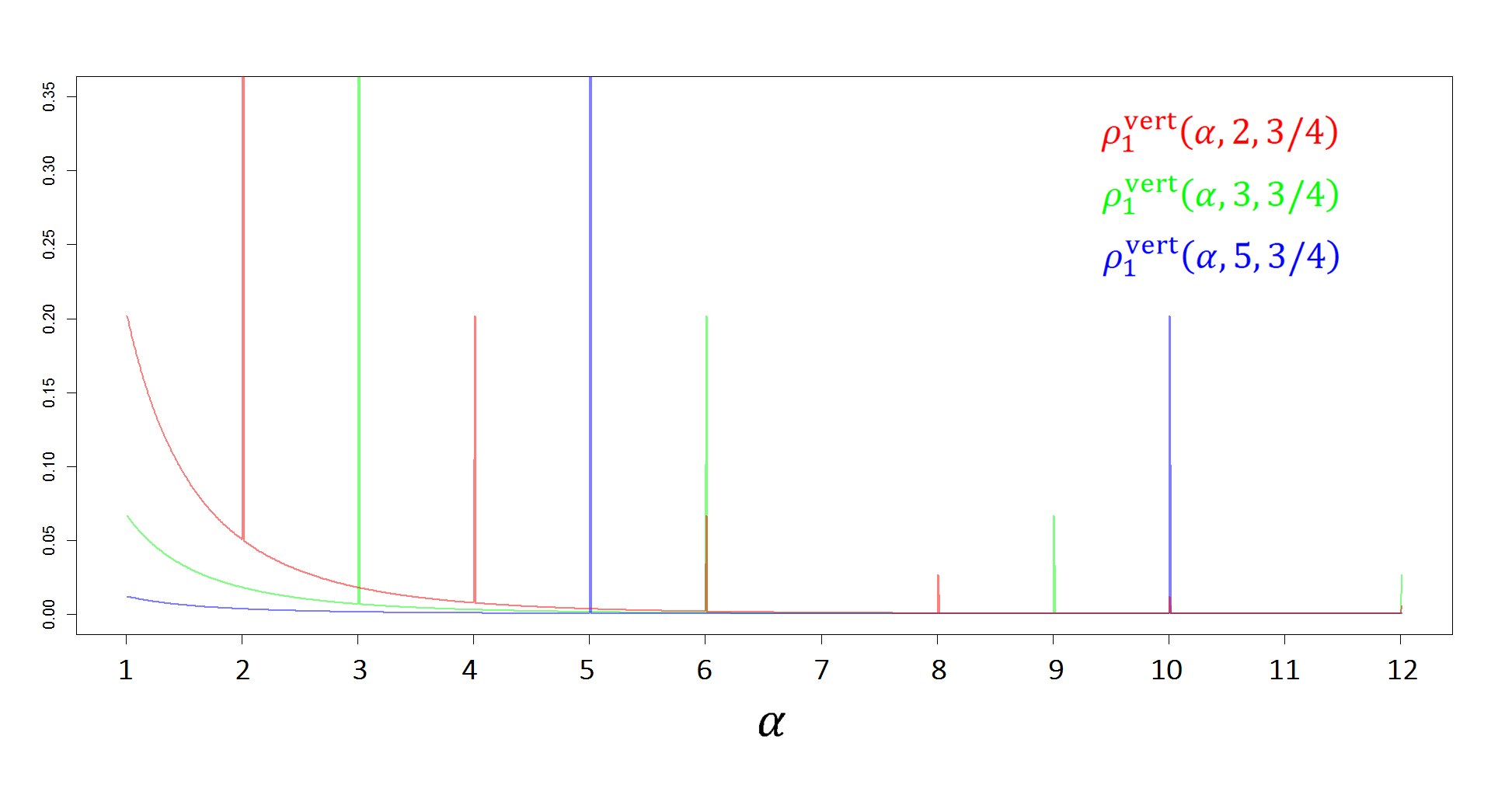}
\caption{Approximations of the correlations defined by (\ref{gen vert corr intro}) for the zeta function $\zeta(s)$ with $\textnormal{Re}(s)=3/4$, $\rho_{1}^{\textnormal{vert}}(\alpha,\beta,3/4)$, where $\beta=2$, $3$, $5$, and $1\leq \alpha \leq 12 \in \mathbb{Q}$.}
\label{big cor plot intro}
\end{figure}

We leverage this nonlocal correlation structure to prove the following set of identities:
\begin{thm}For all $v \in \mathbb{R}$ and  $x>2^{1/6}$,
\begin{equation}
\sum_{\substack{n>1 \\ \mu(n)=1}}\frac{1}{n^{v}}\textnormal{Li}_{v}\left(\frac{1}{x^{n}}\right)=\sum_{n}\frac{\mu(n)}{n^{v}}\sum_{\substack{m \nmid n \\ m<n}}\frac{\mu(m)}{m^{v}}\textnormal{Li}_{v}\left(\frac{1}{x^{nm}}\right) \label{dabigone}
\end{equation}
and 
\begin{equation}
\sum_{\substack{n >1 \\ \mu(n)=-1}}\frac{1}{n^{v}}\textnormal{Li}_{v}\left(\frac{1}{x^{n}}\right)=\textnormal{Li}_{v}\left(\frac{1}{x}\right)-\frac{1}{x}+\sum_{n}\frac{\mu(n)}{n^{v}}\sum_{\substack{m \nmid n \\ m<n}}\frac{\mu(m)}{m^{v}}\textnormal{Li}_{v}\left(\frac{1}{x^{nm}}\right). \label{dabigone2}
\end{equation}
\label{bigthm1}
\end{thm}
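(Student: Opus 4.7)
The plan is to reduce both identities to an iterated Möbius inversion on polylogarithm-weighted Dirichlet series, mirroring the double-L structure that produces the correlation jumps in (\ref{nonloc diag corr 1})--(\ref{nonloc vert corr 1}). First I would prove the basic single-inversion identity
\[
\sum_{n \geq 1} \frac{\mu(n)}{n^v}\,\textnormal{Li}_v\!\left(\frac{1}{x^n}\right) = \frac{1}{x},
\]
by expanding $\textnormal{Li}_v(1/x^n) = \sum_k 1/(k^v x^{nk})$, absorbing $n^{-v}$ into $(nk)^{-v}$, re-indexing via $j = nk$, and collapsing the inner sum via $\sum_{n \mid j}\mu(n) = [j=1]$; absolute convergence for $x>1$ justifies the rearrangement. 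Splitting this identity by the sign of $\mu(n)$ for $n>1$ gives $\textnormal{Li}_v(1/x) + A - B = 1/x$, where $A$ and $B$ denote the left-hand sides of (\ref{dabigone}) and (\ref{dabigone2}). Hence $B = A - 1/x + \textnormal{Li}_v(1/x)$, which is precisely the difference between the two right-hand sides, so the two identities are equivalent and it suffices to prove (\ref{dabigone}).

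For (\ref{dabigone}) I would introduce the doubly-indexed Möbius convolution
\[
T(x,v) := \sum_{n,m \geq 1} \frac{\mu(n)\mu(m)}{(nm)^v}\,\textnormal{Li}_v\!\left(\frac{1}{x^{nm}}\right),
\]
and evaluate it by applying the basic inversion twice: first on $m$ with the base $y = x^n$ (producing $1/x^n$) and then on $n$, yielding $T = \sum_n \mu(n)/(n^v x^n)$. Next I would partition the ordered pairs $(n,m)$ by the multiplicative relationship between $n$ and $m$---exactly the dichotomy that drives the jumps in (\ref{nonloc diag corr 1})--(\ref{nonloc vert corr 1}): the diagonal $m=n$, the strict divisibility block ($m \mid n$ with $m<n$, together with its mirror $n \mid m$ with $n<m$), and the non-divisor block. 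By the symmetry $n \leftrightarrow m$, the non-divisor block contributes exactly $2R$, where $R$ is the right-hand side of (\ref{dabigone}). Evaluating the diagonal and divisibility blocks in closed form via Dirichlet convolution identities such as $(\mu \ast \mu)(n) = \sum_{d \mid n}\mu(d)\mu(n/d)$, expanding $A$ via the indicator identity $[\mu(n)=1] = (\mu(n)^2 + \mu(n))/2$, and comparing coefficients reduces (\ref{dabigone}) to an algebraic equality among standard Dirichlet series.

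The main obstacle will be the divisibility block: two Möbius factors chained along a divisor relation do not telescope as cleanly as in the basic inversion, and careful bookkeeping is required to align them with $A$'s closed form. The hypothesis $x > 2^{1/6}$ is precisely the regularity threshold ensuring absolute convergence of the full double series $\sum_{n,m}(nm)^{-v} x^{-nm}$, which legitimizes the partition-and-rearrange argument above; without it an auxiliary analytic-continuation step would be needed to extend the conclusion to the stated domain.
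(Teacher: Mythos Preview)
Your route differs entirely from the paper's: there the $v=2$ case is obtained by computing $E\{\varepsilon_{\chi_0}^{2}\}$ in two ways (Lemma~\ref{mse thm}), equating (\ref{simp mse}) with (\ref{mse still still less raw}) to reach (\ref{oooo 2}), isolating a single prime via two principal characters to get (\ref{oooo final 2}), and then extending to all real $v$ by termwise integration/differentiation in $\log x$ and a Taylor expansion in $v$. Your reduction of (\ref{dabigone2}) to (\ref{dabigone}) through the single inversion is correct and matches the paper's final use of (\ref{gen x}). But the core of your plan has a gap that bookkeeping cannot close. After partitioning $T$ you need the diagonal and divisibility blocks to combine with $T=\sum_{n}\mu(n)n^{-v}x^{-n}$ to yield exactly $2A$; you flag this as the ``main obstacle'' without carrying it out, and in fact it cannot be carried out. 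Expand both sides of (\ref{dabigone}) in powers of $1/x$ and inspect $x^{-30}$. On the left the divisors $n>1$ of $30$ with $\mu(n)=1$ are $6,10,15$, giving coefficient $3/30^{v}$. On the right one sums $\mu(n)\mu(m)/30^{v}$ over triples $(n,m,k)$ with $nmk=30$, $m<n$, $m\nmid n$; the $k=1$ layer contributes $-3$ from $(15,2),(10,3),(6,5)$, the layers $k=2,3,5$ each contribute $+1$, and the total is $0$. So the two sides disagree at order $x^{-30}$, and no rearrangement of $T$ can repair this. The structural reason your decomposition does not mirror the paper's is that every term of your $T$ depends on $(n,m)$ only through the product $nm$, whereas the paper's engine is (\ref{logzeta cov alternate}), where $R_{\chi_0}^{\textnormal{diag}}(n,m,\sigma)$ genuinely separates $m\mid n$ from $m\nmid n$; that separation is what produces (\ref{oooo 2}) and is invisible in $T$. (The same $x^{-30}$ check, incidentally, points to a problem upstream in the paper too: the dichotomy in (\ref{logzeta cov alternate}) is only valid when $\gcd(n,m)\in\{1,m\}$, and squarefree pairs such as $(n,m)=(15,10)$ fit neither case.)

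Separately, your reading of $x>2^{1/6}$ is incorrect: the double series $\sum_{n,m}(nm)^{-v}x^{-nm}$ converges absolutely for every $x>1$ and every real $v$, so it is not a convergence threshold for your argument. In the paper the bound comes from the constraint $\sigma>1/12$ in (\ref{hellyeah}) (the smallest relevant index on either side of (\ref{oooo final}) is $6$), which under $x=p^{2\sigma}$ with $p\ge 2$ becomes $x>2^{1/6}$.
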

\hspace{-.45cm}The function $\mu(n)$ is the well-known M{\"o}bius function, defined as
\begin{equation}
\mu(n)=\begin{cases} 0; & \textnormal{$n$ has a repeated prime factor (not square-free)} \\ (-1)^{\omega(n)}; & \textnormal{$n$ has $\omega(n)$ distinct prime factors (square-free)} \end{cases} \label{mobius}
\end{equation}
and the function $\textnormal{Li}_{s}(z)$ is the polylogarithm, a function that appears in many areas of mathematics and physics, which has the series definition
\begin{equation}
\textnormal{Li}_{s}(z)=\sum_{k=1}^{\infty}\frac{z^{k}}{k^{s}} \label{polylog}
\end{equation}
for all $s \in \mathbb{C}$ with $|z|<1$. The identities (\ref{dabigone})-(\ref{dabigone2}) are hence relations between sums over M{\"o}bius function solutions, i.e., integers with either an even or odd number of distinct prime factors, and sums over the non-divisors of the positive integers, where the terms of both sums are weighted with the polylogarithm. These results thus provide a novel connection between the distribution of M{\"o}bius function solutions, non-divisor sums, and the polylogarithms. We prove the results described above in the following sections.

\section{Preliminaries}
\label{prelims}
The prime zeta function $\mathcal{P}\left(s\right)$ is a closely related function to $\zeta(s)$ that has the expressions
\begin{equation}
\mathcal{P}\left(s\right)=\sum_{p}\frac{1}{p^{s}} \label{prime zeta sum}
\end{equation}
and 
\begin{equation}
\mathcal{P}\left(s\right)=\sum_{n=1}^{\infty}\frac{\mu(n)}{n}\log\zeta\left(ns\right).\label{prime zeta ac}
\end{equation}
The expression (\ref{prime zeta ac}) is derived by taking the logarithm of $\zeta(s)$'s Euler product, Taylor expanding, and using the well-known technique of M{\"o}bius inversion. This technique is enabled by the important fact that the sum of the M{\"o}bius function over the divisors of any $n>1$ vanishes. That is,
\begin{equation}
\sum_{m|n}\mu(m)=0 \label{mobius vanishing}
\end{equation}
for all $n>1$. The prime sum (\ref{prime zeta sum}) is absolutely convergent for $\textnormal{Re}(s)>1$ and the M{\"o}bius inversion-derived analytic continuation (\ref{prime zeta ac}) is convergent everywhere in $\textnormal{Re}(s)>0$ except at the points $ns=1$, which arise from the simple pole $\zeta(1)$, and at the points $ns$ where $\zeta\left(ns\right)=0$.

We will generalize (\ref{prime zeta sum})-(\ref{prime zeta ac}) for the consideration of L-functions $L\left(s,\chi\right)$, where $\chi$ is a given Dirichlet character modulo $M$. We may generalize (\ref{prime zeta sum}) by defining the prime L-function with the series 
\begin{equation}
\mathcal{P}_{\chi}\left(s\right)=\sum_{p}\frac{\chi(p)}{p^{s}} \label{primedirseries}
\end{equation}
where $\chi(p)=e^{-i\theta_{p}}$ for primes $p$ such that $p \nmid M$ and $\chi(p)=0$ for primes $p$ such that $p | M$. The case $\chi=M=1$ gives results for the zeta function and is represented by (\ref{prime zeta sum})-(\ref{prime zeta ac}). Below we will consider the case of principal characters $\chi_{0}$, which, by (\ref{principal ch}) have the properties
\begin{equation}
\chi_{0}(p)=\begin{cases} 1 \hspace{.25cm}; \hspace{.25cm} p\nmid M \\ 0 \hspace{.25cm}; \hspace{.25cm} p|M \end{cases}
\end{equation}
and  
\begin{equation}
\chi_{0}^{n}=\chi_{0} \label{obviously}
\end{equation}
for any $n>0$. We next note that one may take the logarithm of $L(s,\chi)$'s Euler product, Taylor expand, and apply M{\"o}bius inversion to give a generalized version of (\ref{prime zeta ac}) relating L-functions $L(s, \chi)$ and their corresponding series $\mathcal{P}_{\chi}(s)$ for $\textnormal{Re}(s)>0$:
\begin{equation}
\mathcal{P}_{\chi}(s)=\sum_{n=1}^{\infty}\frac{\mu(n)}{n}\log L\left(ns,\chi^{n}\right) \label{prime l ac}
\end{equation}
For principal characters $\chi_{0}$, the expressions (\ref{primedirseries}) and (\ref{prime l ac}) have equivalent convergence properties to (\ref{prime zeta sum})-(\ref{prime zeta ac}) because both $\zeta(s)$ and $L\left(s,\chi_{0}\right)$ have a pole at $s=1$ and both have the same nontrivial zeros.

Before proceeding we lastly make a few notes about the polylogarithm (\ref{polylog}). One can apply M{\"o}bius inversion with (\ref{polylog}) to show that, for any $|z|<1$ and $s \in \mathbb{C}$,
\begin{equation}
z=\sum_{n=1}^{\infty}\frac{\mu(n)}{n^{s}}\textnormal{Li}_{s}\left(z^{n}\right). \label{gen x}
\end{equation}
Additionally, a function related to the polylogarithm will repeatedly appear that has the form 
\begin{equation}
\sum_{k=1}^{\infty}\frac{\mathcal{P}_{\chi_{0}}(ks)}{k^{2}}. \label{da function}
\end{equation}
For $\textnormal{Re}(s)>1$ we may expand $\mathcal{P}_{\chi_{0}}(ks)$ using (\ref{primedirseries}), switch the order of summation, and apply (\ref{polylog}) to write 
\begin{equation}
\sum_{k=1}^{\infty}\frac{\mathcal{P}_{\chi_{0}}(ks)}{k^{2}}=\sum_{p \nmid M}\textnormal{Li}_{2}\left(\frac{1}{p^{s}}\right). \label{da function 2}
\end{equation}
We will apply the above functions and their properties below.

\section{Nonlocal correlations in prime-L vs. log-L-functions}
\label{nonlocal corr}

\subsection{The prime-L-function}
We first study the statistical properties of (\ref{prime zeta sum}) and, more generally, (\ref{primedirseries}). We consider $t$ uniformly distributed in $[a,b]$ with $b-a\rightarrow \infty$. It can then be shown, by applying characteristic functions with the fundamental theorem of arithmetic, that the summands of (\ref{primedirseries})'s $\textnormal{Re}\mathcal{P}_{\chi}\left(\sigma+it\right)$ are statistically independent. In general, for $p \neq q$, $c\cos\left(\alpha t\log p+\theta\right)$ and $c'\cos\left(\beta t\log q+\theta'\right)$ are independent random variables for any $c, c', \theta, \theta' \in \mathbb{R}$ and $\alpha,\beta \in \mathbb{Q}$\footnote{See Appendix \ref{app independence} for sketch of proof.}. This is an important fact for the evaluation of expected values, which we will demonstrate first in the proof of the following lemma:
\begin{lemma}
For $t$ uniformly distributed in $[a,b]$ with $b-a\rightarrow \infty$ and $\alpha,\beta \in \mathbb{Q}$,
\begin{align}
E\left\{\textnormal{Re}\mathcal{P}_{\chi}\left(\sigma+i\alpha t\right)\textnormal{Re}\mathcal{P}_{\chi}\left(\sigma'+i\beta t\right)\right\} =\begin{cases} 
       \frac{1}{2}\mathcal{P}_{\chi_{0}}(\sigma+\sigma')\hspace{.1cm}; & \alpha=\beta \\
      0 \hspace{.1cm}; & \alpha \neq \beta
   \end{cases} \label{covar general better}
\end{align}
for all $\sigma+\sigma'>1$ and $\sigma+\sigma'>0$ with $j\left(\sigma+\sigma'\right) \neq 1$ for any $j \in \mathbb{N}$. \label{prime l cov lemma}
\end{lemma}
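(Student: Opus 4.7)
\emph{Proof plan.} The strategy is to expand $\textnormal{Re}\,\mathcal{P}_\chi$ as a cosine series over primes, multiply the two series, take the expectation term by term, and split into diagonal ($p=q$) and off-diagonal ($p\neq q$) contributions. Using $\chi(p)=e^{-i\theta_p}$ for $p\nmid M$ and $\chi(p)=0$ for $p\mid M$, one has
\[
\textnormal{Re}\,\mathcal{P}_\chi(\sigma+i\gamma t)=\sum_{p\nmid M}\frac{\cos(\theta_p+\gamma t\log p)}{p^\sigma},
\]
so that, in the regime $\sigma,\sigma'>1$ where the Dirichlet series are absolutely convergent, the expectation of the product reduces to a double sum over $p,q\nmid M$ of the quantities $E\{\cos(\theta_p+\alpha t\log p)\cos(\theta_q+\beta t\log q)\}/(p^\sigma q^{\sigma'})$.

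For the off-diagonal piece I would invoke the independence result footnoted in the text: for $p\neq q$ and $\alpha,\beta\in\mathbb{Q}$ nonzero, the two cosine factors are independent random variables, so the expectation factors. A direct computation gives
\[
\frac{1}{b-a}\int_a^b\cos(\theta_p+\alpha t\log p)\,dt=O\!\left(\frac{1}{(b-a)\alpha\log p}\right),
\]
which vanishes as $b-a\to\infty$. Hence every off-diagonal term contributes zero in the limit.

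For the diagonal piece I would apply the product-to-sum identity
\[
\cos(\theta_p+\alpha t\log p)\cos(\theta_p+\beta t\log p)=\tfrac12\cos\bigl((\alpha-\beta)t\log p\bigr)+\tfrac12\cos\bigl(2\theta_p+(\alpha+\beta)t\log p\bigr).
\]
The second summand has nonzero frequency $(\alpha+\beta)\log p$ and so averages to $0$ by the same integration as above; the first summand averages to $1$ when $\alpha=\beta$ and to $0$ when $\alpha\neq\beta$. Summing $\tfrac12\,p^{-(\sigma+\sigma')}$ over $p\nmid M$ then yields $\tfrac12\,\mathcal{P}_{\chi_0}(\sigma+\sigma')$ in the case $\alpha=\beta$ and $0$ in the case $\alpha\neq\beta$, matching (\ref{covar general better}).

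The main obstacle, in my view, is extending this identity from the absolute-convergence strip $\sigma,\sigma'>1$ to the full domain $\sigma+\sigma'>0$ with $j(\sigma+\sigma')\neq 1$ for every $j\in\mathbb{N}$, since individual Dirichlet series need not converge there. My approach would be to use the M\"obius-inverted representation (\ref{prime l ac}) of $\mathcal{P}_{\chi_0}$ to analytically continue both sides as functions of $\sigma+\sigma'$ and then apply the identity principle, noting that the continuation is regular precisely on the stated domain (the excluded points $j(\sigma+\sigma')=1$ being inherited from the simple pole of $L(ns,\chi_0)$ at $ns=1$). The technical burden is establishing uniform enough control on truncated sums so that the $b-a\to\infty$ limit may be commuted with the analytic continuation.
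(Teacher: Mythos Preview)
Your approach is essentially the same as the paper's: expand each factor as a cosine series over primes, split the product into diagonal ($p=q$) and off-diagonal ($p\neq q$) contributions, dispatch the off-diagonal terms via the independence result of Appendix~\ref{app independence}, and evaluate the diagonal terms via the cosine product identity. The paper is in fact terser than you on the extension beyond $\sigma,\sigma'>1$: it simply invokes the domains of convergence of the two representations (\ref{primedirseries}) and (\ref{prime l ac}) of $\mathcal{P}_{\chi_0}$ to declare the stated range, without spelling out the analytic-continuation step you flag as the main obstacle.
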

\begin{proof}
For $t$ uniformly distributed in $[a,b]$, the expected value of a given function $f(.)$ is defined 
$$E\left\{f\left(t\right)\right\}=\frac{1}{b-a}\int_{a}^{b}f\left(x\right)dx.$$
We note from the Fubini and Tonelli theorems that if
\begin{equation}
\sum_{n}\frac{1}{b-a}\int_{a}^{b}\left|f_{n}(t)\right|dt<\infty \label{ft cond}
\end{equation}
for general functions $f_{n}(t)$, then 
\begin{equation}
\frac{1}{b-a}\int_{a}^{b}\sum_{n}f_{n}(t)dt=\sum_{n}\frac{1}{b-a}\int_{a}^{b}f_{n}(t)dt. \label{ft}
\end{equation}
We next apply (\ref{primedirseries}) to note that, for any $\alpha,\beta \in \mathbb{Q}$ and $\sigma,\sigma'>1$,
\begin{align}
\textnormal{Re}\mathcal{P}_{\chi}\left(\sigma+i\alpha t\right)\textnormal{Re}\mathcal{P}_{\chi'}\left(\sigma'+i\beta t\right)=\sum_{p \nmid M,M'}\frac{\cos\left(\alpha t\log p+\theta_{p}\right)\cos\left(\beta t\log p+\theta'_{p}\right)}{p^{\sigma+\sigma'}} \nonumber \\ +2\sum_{p\nmid M}\frac{\cos\left(\alpha t\log p+\theta_{p}\right)}{p^{\sigma}}\sum_{\substack{q<p \\ q \nmid M'}}\frac{\cos\left(\beta t \log q+\theta'_{q}\right)}{q^{\sigma'}}. \label{gen pz multiply}
\end{align}
We then note that the series on the right-hand side of (\ref{gen pz multiply}) satisfy the absolute convergence property
\begin{align}
\sum_{p \nmid M,M'}\frac{\left|\cos\left(\alpha t\log p+\theta_{p}\right)\cos\left(\beta t\log p+\theta'_{p}\right)\right|}{p^{\sigma+\sigma'}} \nonumber \\ +2\sum_{p\nmid M}\frac{\left|\cos\left(\alpha t\log p+\theta_{p}\right)\right|}{p^{\sigma}}\sum_{\substack{q<p \\ q \nmid M'}}\frac{\left|\cos\left(\beta t\log q+\theta'_{q}\right)\right|}{q^{\sigma'}} \nonumber \\ 
\leq \sum_{p}\frac{1}{p^{\sigma+\sigma'}}+2\sum_{p}\frac{1}{p^{\sigma}}\sum_{q<p}\frac{1}{q^{\sigma'}} < \mathcal{P}(\sigma+\sigma')+2\mathcal{P}(\sigma)\mathcal{P}(\sigma'). \label{gen bound}
\end{align}
Note that (\ref{gen bound}) shows that (\ref{gen pz multiply}) satisfies (\ref{ft cond}). Therefore the expected value can be applied term-by-term to (\ref{gen pz multiply}).

We then apply the expected value with $b-a \rightarrow \infty$ and note that, by the statistical independence shown in Appendix \ref{app independence}, the expected value of the double-summation in (\ref{gen pz multiply}) always vanishes because $E\left\{\cos\left(\omega t+\theta\right)\right\}=0$ for all $\omega, \theta \in \mathbb{R}$. Meanwhile the first summation vanishes unless $\alpha=\beta$ since $E\left\{\cos\left(\omega t+\theta\right)\cos\left(\omega' t+\theta'\right)\right\}=0$ for all $\omega \neq \omega'$ and all $\theta, \theta' \in \mathbb{R}$. This gives
\begin{align}
E\left\{\textnormal{Re}\mathcal{P}_{\chi}\left(\sigma+i\alpha t\right)\textnormal{Re}\mathcal{P}_{\chi'}\left(\sigma'+i\beta t\right)\right\} =\begin{cases} 
       \frac{1}{2}\sum_{p \nmid M, M'}\frac{\cos\left(\theta_{p}-\theta_{p}'\right)}{p^{\sigma+\sigma'}}\hspace{.1cm}; & \alpha=\beta \\
      0 \hspace{.1cm}; & \alpha \neq \beta
   \end{cases} \label{covar general}
\end{align}
For $\chi=\chi'$, (\ref{covar general}) is given by (\ref{covar general better}). Then considering the domains of convergence of (\ref{primedirseries}) and (\ref{prime l ac}) completes the proof.
\end{proof}
The result (\ref{covar general better}) shows that $\mathcal{P}_{\chi}(s)$ has no nonlocal correlation structure involving the rational numbers $\alpha$ and $\beta$. For example, (\ref{covar general better}) immediately implies
\begin{align}
E\left\{\textnormal{Re}\mathcal{P}_{\chi}\left(\alpha\left(\sigma+it\right)\right)\textnormal{Re}\mathcal{P}_{\chi}\left(\beta\left(\sigma+it\right)\right)\right\} =\begin{cases} 
       \frac{1}{2}\textnormal{Re}\mathcal{P}_{\chi_{0}}\left(2\alpha\sigma\right) \hspace{.1cm}; & \alpha=\beta \\
      0 \hspace{.1cm}; & \alpha \neq \beta
   \end{cases} \label{covar}
\end{align}
for all $2\alpha\sigma>1$ and $\sigma>0$ with $2\alpha j \sigma \neq 1$ for any $j \in \mathbb{N}$, as well as
\begin{align}
E\left\{\textnormal{Re}\mathcal{P}_{\chi}\left(\sigma+i\alpha t\right)\textnormal{Re}\mathcal{P}_{\chi}\left(\sigma+i\beta t\right)\right\} =\begin{cases} 
       \frac{1}{2}\textnormal{Re}\mathcal{P}_{\chi_{0}}\left(2\sigma\right) \hspace{.1cm}; & \alpha=\beta \\
      0 \hspace{.1cm}; & \alpha \neq \beta
  \end{cases} \label{covar vert}
\end{align}
for all $2\sigma>1$ and $\sigma>0$ with $2j \sigma \neq 1$ for any $j \in \mathbb{N}$. The results (\ref{covar}) and (\ref{covar vert}) show a lack of  \textit{nonlocal diagonal correlation} and \textit{nonlocal vertical correlation} respectively, regardless of any multiplicative relationship between $\alpha$ and $\beta$. We will show this to be strongly contrasting with principal L-functions. To do so we will directly apply the result (\ref{covar general better}).

\subsection{The log-L-function}
Consider a principal Dirichlet character $\chi_{0}$ modulo $M$, its corresponding L-function $L\left(s,\chi_{0}\right)$, and its prime L-function $\mathcal{P}_{\chi_{0}}(s)$. In this section we will show that $\log\left|L\left(s,\chi_{0}\right)\right|$, in contrast to $\textnormal{Re}\mathcal{P}_{\chi_{0}}(s)$, displays significant nonlocal correlations. 

\begin{thm}
For $t$ uniformly distributed in $[a,b]$ with $b-a\rightarrow \infty$, $j \in \mathbb{N}$, $\alpha,\beta \in \mathbb{Q}$, and $\alpha \geq \beta$,
\begin{align}
E\left\{\log\left|L\left(\sigma+i\alpha t,\chi_{0}\right)\right|\log\left|L\left(\sigma'+i\beta t,\chi_{0}\right)\right|\right\} \nonumber \\= \begin{cases} 
       \frac{\beta}{2\alpha}\sum_{k=1}^{\infty}\frac{\mathcal{P}_{\chi_{0}}\left(k\left(\sigma+\frac{\alpha}{\beta}\sigma'\right)\right)}{k^{2}} \hspace{.1cm}; & \beta|\alpha, \hspace{.42cm} \sigma+\frac{\alpha}{\beta}\sigma'>0, \hspace{.1cm}\sigma+\frac{\alpha}{\beta}\sigma' \neq \frac{1}{j}\\
      \frac{1}{2\alpha \beta}\sum_{k=1}^{\infty}\frac{\mathcal{P}_{\chi_{0}}\left(k\left(\beta\sigma+\alpha \sigma'\right)\right)}{k^{2}} \hspace{.1cm}; & \beta \nmid \alpha, \hspace{.25cm} \beta\sigma+\alpha \sigma'>0, \hspace{.1cm}\beta \sigma+\alpha \sigma' \neq \frac{1}{j}
   \end{cases} \label{logzeta cov}
\end{align} 
and
\begin{align}
E\left\{\log\left|L\left(\sigma+i \alpha t,\chi_{0}\right)\right|\log\left|L\left(\sigma'+i\beta t,\chi_{0}\right)\right|\right\}\nonumber \\  = \begin{cases} 
       \frac{\beta}{2\alpha}\sum_{p \nmid M}\textnormal{Li}_{2}\left(\frac{1}{p^{\sigma+\frac{\alpha}{\beta}\sigma'}}\right)  \hspace{.1cm}; & \beta|\alpha , \hspace{.42cm}\sigma+\frac{\alpha}{\beta}\sigma'>1\\
      \frac{1}{2\alpha \beta}\sum_{p \nmid M}\textnormal{Li}_{2}\left(\frac{1}{p^{\beta \sigma+\alpha \sigma'}}\right) \hspace{.1cm}; & \beta \nmid \alpha, \hspace{.25cm} \beta \sigma+\alpha \sigma'>1
   \end{cases} \label{logzeta cov 2}
\end{align}
\label{logzeta cov thm}
\end{thm}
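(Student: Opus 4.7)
The plan is to reduce Theorem \ref{logzeta cov thm} to a termwise application of Lemma \ref{prime l cov lemma} by unfolding $\log|L(s,\chi_{0})|$ in terms of $\mathcal{P}_{\chi_{0}}$. Taking the logarithm of the Euler product (\ref{l function}), Taylor-expanding $-\log(1-x)$, and extracting real parts yields
\[
\log\bigl|L(\sigma+i\alpha t,\chi_{0})\bigr|=\sum_{k=1}^{\infty}\frac{\textnormal{Re}\,\mathcal{P}_{\chi_{0}}(k\sigma+ik\alpha t)}{k},
\]
which holds for $\sigma>1$ and extends into the strip $\sigma>0$ via the Möbius-inversion continuation already invoked in (\ref{prime l ac}).

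Multiplying this representation at $(\sigma,\alpha)$ against its twin at $(\sigma',\beta)$ produces a double sum over $(k,l)\in\mathbb{N}^{2}$. An absolute bound built as in (\ref{gen bound})---dominating each summand by $(kl)^{-1}[\mathcal{P}(k\sigma+l\sigma')+2\mathcal{P}(k\sigma)\mathcal{P}(l\sigma')]$---is summable in the stated regime and licenses pulling the expectation inside both sums by Fubini/Tonelli. Applying Lemma \ref{prime l cov lemma} termwise with $(\sigma,\sigma',\alpha,\beta)\mapsto(k\sigma,l\sigma',k\alpha,l\beta)$ annihilates every pair except those on the lattice $\{k\alpha=l\beta\}$, where the expectation equals $\tfrac{1}{2}\mathcal{P}_{\chi_{0}}(k\sigma+l\sigma')$.

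Writing $\alpha/\beta=p/q$ in lowest terms parametrizes that lattice as $(k,l)=(jq,jp)$ with $j\in\mathbb{N}$. When $\beta\mid\alpha$ we have $q=1$ and $p=\alpha/\beta$, so the surviving sum collapses to $(\beta/2\alpha)\sum_{j}\mathcal{P}_{\chi_{0}}(j(\sigma+(\alpha/\beta)\sigma'))/j^{2}$, which is the first branch of (\ref{logzeta cov}); when $\beta\nmid\alpha$ the coprime pair $p,q\geq 2$ produces the second branch. Identity (\ref{logzeta cov 2}) then follows from (\ref{logzeta cov}) by expanding each $\mathcal{P}_{\chi_{0}}$ as a prime series via (\ref{primedirseries}) and swapping the $k$- and $p$-summations, exactly as in the derivation of (\ref{da function 2}); the hypotheses $\sigma+(\alpha/\beta)\sigma'>1$ and $\beta\sigma+\alpha\sigma'>1$ are what make that second interchange absolutely convergent and recast the inner $k$-sum as $\textnormal{Li}_{2}$.

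The main technical obstacle is that the natural domain of (\ref{logzeta cov}) reaches into the critical strip, where the Euler product for $\log L(\cdot,\chi_{0})$ diverges. There the simple Dirichlet majorant used above is no longer available, and one must feed the analytically continued $\mathcal{P}_{\chi_{0}}$ into the Fubini step. A clean way to handle this is to justify everything rigorously for $\sigma,\sigma'>1$ and then extend to the stated regions by analytic continuation in $(\sigma,\sigma')$, checking that the excluded points $\sigma+(\alpha/\beta)\sigma'=1/j$ and $\beta\sigma+\alpha\sigma'=1/j$ coincide precisely with the singularities that $\mathcal{P}_{\chi_{0}}(k\,\cdot)$ inherits from the simple pole of $L(\cdot,\chi_{0})$ at $s=1$, so that no spurious obstructions to the identity arise off of these points.
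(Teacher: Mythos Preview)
Your proposal is correct and follows essentially the same route as the paper: expand $\log|L|$ via the Euler product into a series in $\textnormal{Re}\,\mathcal{P}_{\chi_{0}}$, multiply into a double sum, justify termwise expectation by an absolute Fubini bound, apply Lemma~\ref{prime l cov lemma} to kill all terms off the lattice $k\alpha=l\beta$, split into the $\beta\mid\alpha$ and $\beta\nmid\alpha$ cases, derive (\ref{logzeta cov 2}) from (\ref{logzeta cov}) via (\ref{da function 2}), and extend from $\sigma,\sigma'>1$ to the stated domain by analytic continuation. The only cosmetic differences are that the paper first splits the double sum into diagonal ($k=k'$) and strictly triangular ($k'<k$) pieces before taking expectations, whereas you keep a single double sum and parametrize the surviving lattice by writing $\alpha/\beta=p/q$ in lowest terms; your bookkeeping here is actually cleaner than the paper's, which tacitly assumes coprimality when it infers ``$\beta\nmid\alpha$ and $\beta\mid k\alpha\Rightarrow\beta\mid k$''---note that your resulting expression $(2pq)^{-1}\sum_{j}\mathcal{P}_{\chi_{0}}(j(q\sigma+p\sigma'))/j^{2}$ matches the second branch of (\ref{logzeta cov}) verbatim only when $(\alpha,\beta)=(p,q)$, an assumption the paper's own argument also needs.
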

\begin{proof}
We begin by considering $\alpha \geq \beta$ and $\beta \sigma>1$ so that we may apply the Euler product and Taylor expansion for $\log \left|L\left(s,\chi_{0}\right)\right|$ with (\ref{obviously}) to write
\begin{align}
\log\left|L\left(\sigma+i \alpha t,\chi_{0}\right)\right|\log\left|L\left(\sigma'+i\beta t,\chi_{0}\right)\right| \nonumber \\ =\sum_{k=1}^{\infty}\frac{\textnormal{Re}\mathcal{P}_{\chi_{0}}\left(k\left(\sigma+i\alpha t\right)\right)\textnormal{Re}\mathcal{P}_{\chi_{0}}\left(k\left(\sigma'+i\beta t\right)\right)}{k^{2}} \nonumber \\
+2\sum_{k=1}^{\infty}\sum_{k'<k}\frac{\textnormal{Re}\mathcal{P}_{\chi_{0}}\left(k\left(\sigma+i\alpha t\right)\right)\textnormal{Re}\mathcal{P}_{\chi_{0}}\left(k'\left(\sigma'+i\beta t\right)\right)}{kk'}  \label{app error 3}
\end{align}
We next will apply the expectation to (\ref{app error 3}) under $t$ uniformly distributed in $[a,b]$ with $b-a\rightarrow \infty$. We first note from (\ref{app error 3}) and (\ref{gen bound}) that (\ref{app error 3})'s first summation has the absolute upper bound
\begin{equation}
\sum_{k=1}^{\infty}\frac{\mathcal{P}_{\chi_{0}}\left(k\sigma\right)\mathcal{P}_{\chi_{0}}\left(k \sigma'\right)}{k^{2}}<\infty \label{term 1 upper bound}
\end{equation}
and that (\ref{app error 3})'s second summation has the absolute upper bound 
\begin{align}
2\sum_{k=1}^{\infty}\frac{\mathcal{P}_{\chi_{0}}\left(k \sigma\right)}{k}\sum_{k'<k}\frac{\mathcal{P}_{\chi_{0}}\left(k'\sigma'\right)}{k'}
< 2\mathcal{P}_{\chi_{0}}\left(\sigma'\right)\sum_{k=1}^{\infty}\mathcal{P}_{\chi_{0}}\left(k \sigma\right)<\infty, \label{term 2 upper bound}
\end{align}
the finiteness of which may be easily shown from the asymptotic decay rates implied by (\ref{primedirseries}) and the ratio test. The finite bounds (\ref{term 1 upper bound}) and (\ref{term 2 upper bound}) show that (\ref{app error 3}) satisfies the condition (\ref{ft cond}). We may therefore apply the expectation term-by-term. 

We first consider the case $\alpha=\beta$ and apply the expectation, using (\ref{covar general better}) to show that then only (\ref{app error 3})'s first summation has nonzero expected value, which gives the result
\begin{equation}
E\left\{\log\left|L\left(\sigma+i\alpha t,\chi_{0}\right)\right|\log\left|L\left(\sigma'+i\alpha t,\chi_{0}\right)\right|\right\}=\frac{1}{2}\sum_{k=1}^{\infty}\frac{\mathcal{P}_{\chi_{0}}\left(k(\sigma+\sigma')\right)}{k^{2}}. \label{logzeta var}
\end{equation}
We next consider the case $\alpha \neq \beta$ and apply the expectation, using (\ref{covar general better}) to show that then the only terms with nonzero expected value are those in (\ref{app error 3})'s second summation where $k\alpha=k'\beta$, i.e., terms where $k'=k\alpha/\beta \in \mathbb{N}$. This constraint on $k'$ implies that $\beta|k\alpha$, i.e., either $\beta|\alpha$ or $\beta\nmid \alpha$ and $\beta |k$. Considering these two cases with (\ref{app error 3}), simplifying, and comparing to (\ref{logzeta var}) gives the following result for $\alpha \geq \beta$:
\begin{align}
E\left\{\log\left|L\left(\sigma+i\alpha t,\chi_{0}\right)\right|\log\left|L\left(\sigma'+i\beta t,\chi_{0}\right)\right|\right\} \nonumber \\= \begin{cases} 
       \frac{\beta}{2\alpha}\sum_{k=1}^{\infty}\frac{\mathcal{P}_{\chi_{0}}\left(k\left(\sigma+\frac{\alpha}{\beta}\sigma'\right)\right)}{k^{2}} \hspace{.1cm}; & \beta|\alpha \\
      \frac{1}{2\alpha \beta}\sum_{l=1}^{\infty}\frac{\mathcal{P}_{\chi_{0}}\left(l\left(\beta \sigma+\alpha \sigma'\right)\right)}{l^{2}}\hspace{.1cm}; & \beta \nmid \alpha
   \end{cases}  \label{logzeta cov raw}
\end{align}
where the lower-right result is achieved by simply noting $\beta|k$ implies $k=l\beta$ for some $l \in \mathbb{N}$. 

We next note that (\ref{logzeta cov raw}) may be written using the analytic continuation (\ref{prime l ac}), which extends (\ref{logzeta cov raw})'s domain of definition to $\sigma+\frac{\alpha}{\beta}\sigma'>0$ with $kj\left(\sigma+\frac{\alpha}{\beta}\sigma'\right) \neq 1$ for any $k,j \in \mathbb{N}$ in the case of $\beta|\alpha$ and $kj \left(\beta \sigma+\alpha \sigma'\right) \neq 1$ for any $k,j \in \mathbb{N}$ in the case of $\beta \nmid \alpha$. This proves (\ref{logzeta cov}) and its constraints on $\sigma$. We then note that (\ref{logzeta cov raw}) may be written using the prime sum (\ref{primedirseries}), giving
\begin{align}
E\left\{\log\left|L\left(\sigma+i\alpha t,\chi_{0}\right)\right|\log\left|L\left(\sigma'+i\beta t,\chi_{0}\right)\right|\right\} \nonumber \\= \begin{cases} 
       \frac{\beta}{2\alpha}\sum_{k=1}^{\infty}\frac{1}{k^{2}}\sum_{p \nmid M}\frac{1}{p^{k\left(\sigma+\frac{\alpha}{\beta}\sigma'\right)}} \hspace{.1cm}; & \beta|\alpha \\
      \frac{1}{2\alpha \beta}\sum_{l=1}^{\infty}\frac{1}{l^{2}}\sum_{p \nmid M}\frac{1}{p^{l\left(\beta \sigma+\alpha \sigma'\right)}}\hspace{.1cm}; & \beta \nmid \alpha
   \end{cases}  \label{logzeta cov raw sum}
\end{align}
The domain of convergence for (\ref{primedirseries}) implies that (\ref{logzeta cov raw sum}) is convergent for $\sigma+\frac{\alpha}{\beta}\sigma'>1$ in the case of $\beta |\alpha$ and $\beta \sigma+\alpha \sigma'>1$ in the case of $\beta \nmid \alpha$. We then lastly note that, by (\ref{da function 2}), (\ref{logzeta cov raw}) is equivalent to (\ref{logzeta cov 2}). 
\end{proof}

\subsubsection{Nonlocal diagonal correlation}
\label{integer diagonal correlation}
We first apply Theorem \ref{logzeta cov thm} to describe $\log\left|L\left(s,\chi_{0}\right)\right|$'s nonlocal diagonal covariance function, which we denote by $R_{\chi_{0}}^{\textnormal{diag}}(\alpha,\beta,\sigma)$, and which, by (\ref{logzeta cov})-(\ref{logzeta cov 2}), may be written
\begin{align}
R_{\chi_{0}}^{\textnormal{diag}}(\alpha,\beta,\sigma)=E\left\{\log\left|L\left(\alpha \left(\sigma+it\right),\chi_{0}\right)\right|\log\left|L\left(\beta \left(\sigma+it\right),\chi_{0}\right)\right|\right\}\nonumber \\=
\begin{cases} 
       \frac{\beta}{2\alpha}\sum_{k=1}^{\infty}\frac{\mathcal{P}_{\chi_{0}}(2k\alpha\sigma)}{k^{2}} \hspace{.1cm}; & \beta|\alpha, \hspace{.42cm} \sigma>0, \hspace{.1cm}\sigma \neq \frac{1}{2j\alpha}, j \in \mathbb{N}\\
      \frac{1}{2\alpha \beta}\sum_{k=1}^{\infty}\frac{\mathcal{P}_{\chi_{0}}(2k\alpha \beta\sigma)}{k^{2}} \hspace{.1cm}; & \beta \nmid \alpha, \hspace{.25cm} \sigma>0, \hspace{.1cm}\sigma \neq \frac{1}{2j\alpha \beta}, j \in \mathbb{N}
   \end{cases}\nonumber \\ =
\begin{cases} 
       \frac{\beta}{2\alpha}\sum_{p \nmid M}\textnormal{Li}_{2}\left(\frac{1}{p^{2\alpha \sigma}}\right)  \hspace{.1cm}; & \beta|\alpha , \hspace{.42cm} \sigma>\frac{1}{2\alpha}\\
      \frac{1}{2\alpha \beta}\sum_{p \nmid M}\textnormal{Li}_{2}\left(\frac{1}{p^{2\alpha \beta \sigma}}\right) \hspace{.1cm}; & \beta \nmid \alpha, \hspace{.25cm} \sigma>\frac{1}{2\alpha \beta} 
   \end{cases} \label{diag covar}
\end{align}
It is clear that (\ref{diag covar}) has much richer structure than $\textnormal{Re}\mathcal{P}_{\chi_{0}}(s)$'s corresponding diagonal covariance function  (\ref{covar}). From (\ref{diag covar}) we see that $R_{\chi_{0}}^{\textnormal{diag}}(\alpha,\beta,\sigma)$ has the noteworthy and useful property 
\begin{equation}
R_{\chi_{0}}^{\textnormal{diag}}(\alpha ,\beta,\sigma)= \begin{cases} 
       \frac{\beta}{\alpha}R_{\chi_{0}}^{\textnormal{diag}}(\alpha,\alpha,\sigma) \hspace{.1cm}; & \beta|\alpha \\
      \frac{1}{\alpha \beta}R_{\chi_{0}}^{\textnormal{diag}}(\alpha \beta,\alpha \beta,\sigma) \hspace{.1cm}; & \beta \nmid \alpha
   \end{cases}  \label{logzeta cov alternate}
\end{equation}
To better study the correlations implied by (\ref{diag covar})-(\ref{logzeta cov alternate}), we consider the particular covariance function $R_{1}^{\textnormal{diag}}(\alpha,\beta,\sigma)$, i.e., the covariance between $\log\left|\zeta\left(\alpha \left(\sigma+it\right)\right)\right|$ and $\log\left|\zeta\left(\beta \left(\sigma+it\right)\right)\right|$, but the general conclusions are very similar for other principal L-functions. We define the correlation as
\begin{equation}
\rho_{1}^{\textnormal{diag}}(\alpha,\beta,\sigma)=\frac{R_{1}^{\textnormal{diag}}(\alpha,\beta,\sigma)}{\sqrt{R_{1}^{\textnormal{diag}}(\alpha,\alpha,\sigma)R_{1}^{\textnormal{diag}}(\beta,\beta,\sigma)}} \label{corr}
\end{equation}
for $\alpha \geq \beta$ and as $\rho_{1}^{\textnormal{diag}}(\alpha,\beta,\sigma)=\rho_{1}^{\textnormal{diag}}(\beta,\alpha,\sigma)$ for $\alpha<\beta$. We can numerically approximate (\ref{diag covar}), (\ref{logzeta cov alternate}), and (\ref{corr}) to produce correlation tables like those depicted in Figure \ref{cor table fig} for $\alpha=n$ and  $\beta=m$ with $n,m \in \mathbb{N}$. It is clear from these tables that there is much higher correlation between $\log\left|\zeta\left(n\left(\sigma+it\right)\right)\right|$ and $\log\left|\zeta\left(m\left(\sigma+it\right)\right)\right|$ when either $n|m$ or $m|n$. We may explain this by first noting from (\ref{prime zeta sum}), (\ref{diag covar}), and (\ref{logzeta cov alternate}) that, as $\alpha \rightarrow \infty$,
\begin{equation}
R_{1}^{\textnormal{diag}}(\alpha,\beta,\sigma)\sim \begin{cases} 
       \frac{\beta}{\alpha}\frac{1}{2^{2\alpha \sigma+1}} \hspace{.1cm}; & \beta|\alpha \\
      \frac{1}{\alpha \beta}\frac{1}{2^{2\alpha \beta \sigma+1}} \hspace{.1cm}; & \beta \nmid \alpha
   \end{cases}  \label{logzeta cov asy}
\end{equation}
then applying (\ref{logzeta cov asy}) in (\ref{corr}) and simplifying shows that, as $\alpha \rightarrow \infty$,
\begin{equation}
\rho_{1}^{\textnormal{diag}}(\alpha,\beta,\sigma)\sim \begin{cases} 
       \frac{\beta}{\alpha}2^{(\beta-\alpha)\sigma} \hspace{.1cm}; & \beta|\alpha \\
      \frac{1}{\alpha \beta}2^{(\beta+\alpha-2\alpha \beta)\sigma} \hspace{.1cm}; & \beta \nmid \alpha
   \end{cases}\hspace{.25cm}
=\begin{cases} 
       O\left(2^{-\alpha \sigma}\right) \hspace{.1cm}; & \beta|\alpha \\
      O\left(2^{-\left(2\beta-1\right)\alpha \sigma}\right) \hspace{.1cm}; & \beta \nmid \alpha
   \end{cases}
\label{logzeta corr asy}
\end{equation}
This result shows that, for fixed $\beta$, the correlation decays at a rate $O\left(2^{-\alpha \sigma}\right)$ for $\alpha$ such that $\beta|\alpha$, but decays at a much faster rate $O\left(2^{-(2\beta-1)\alpha \sigma}\right)$ for $\alpha$ such that $\beta \nmid \alpha$. These decay rates are clearly visible in Figure \ref{cor plot fig}. These results thus show that there is potentially high correlation between $\log\left|\zeta\left(\alpha\left(\sigma+it\right)\right)\right|$ and $\log\left|\zeta\left(\beta\left(\sigma+it\right)\right)\right|$, depending on whether $\beta|\alpha$ or $\beta \nmid \alpha$. As mentioned above, the same reasoning gives extremely similar results for other principal L-functions, with $2$ in (\ref{logzeta cov asy})-(\ref{logzeta corr asy}) replaced by the minimum prime $p$ such that $p \nmid M$. This gives the general result (\ref{nonloc diag corr 1}). 

\begin{figure}
\centering
\includegraphics[width=\linewidth]{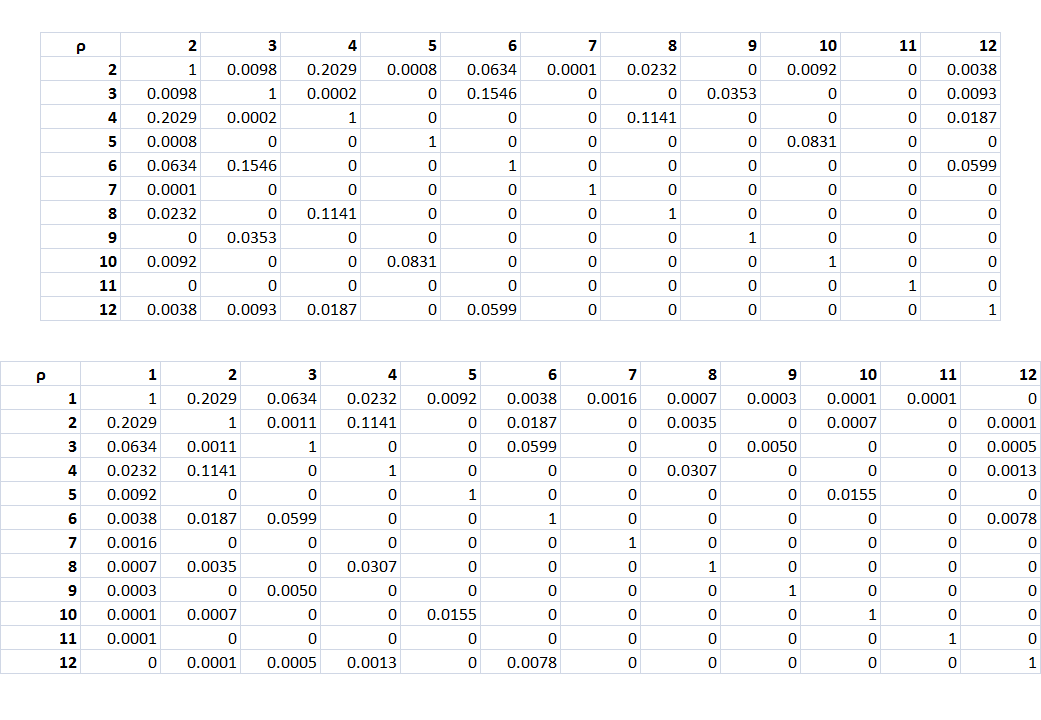}
\caption{Correlation tables produced using (\ref{diag covar}), (\ref{logzeta cov alternate}), and (\ref{corr}) to approximate $\rho_{1}^{\textnormal{diag}}(n,m,\sigma)$ to four decimal places with $2 \leq n,m \leq 12 \in \mathbb{N}$ and $\sigma=1/2$ (Top) and $1\leq n,m \leq 12 \in \mathbb{N}$ and $\sigma=1$ (Bottom).}
\label{cor table fig}
\end{figure}

\begin{figure}
\centering
\includegraphics[width=\linewidth]{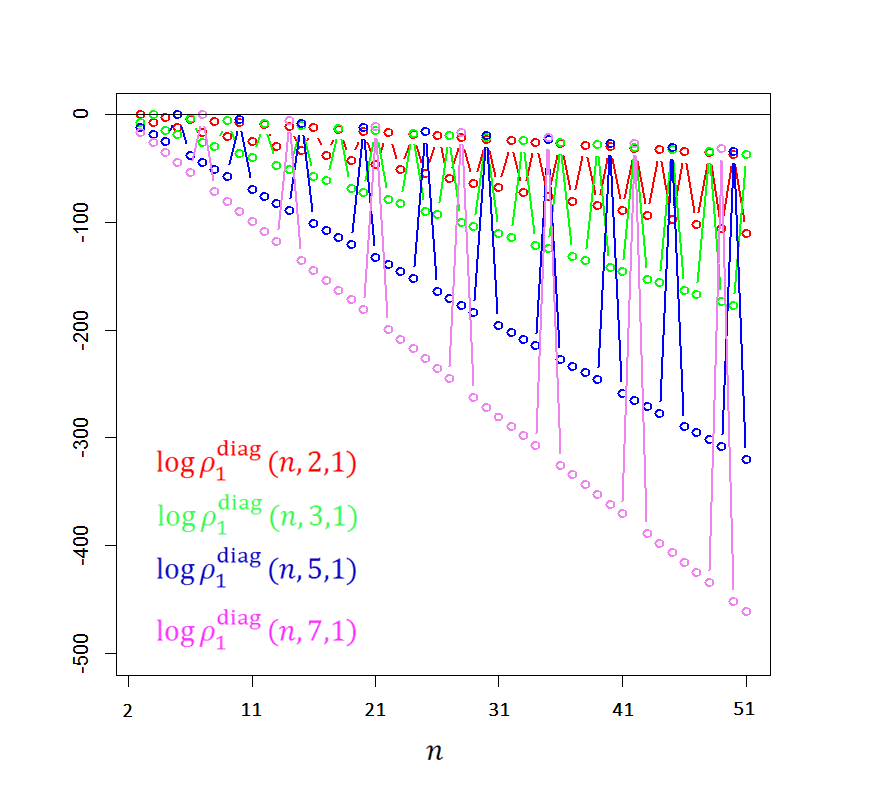}
\caption{Plots of $\log \rho_{1}^{\textnormal{diag}}(n,m,1)$ approximated using (\ref{diag covar}), (\ref{logzeta cov alternate}), (\ref{corr}) with $m=2, 3, 5, 7$ and $2\leq n \leq 51 \in \mathbb{N}$.}
\label{cor plot fig}
\end{figure}

\subsubsection{Nonlocal vertical correlation}
\label{integer vertical correlation}
We next apply Theorem \ref{logzeta cov thm} to describe $\log\left|L\left(s,\chi_{0}\right)\right|$'s nonlocal vertical covariance function, which we denote by $R_{\chi_{0}}^{\textnormal{vert}}(\alpha,\beta,\sigma)$, and which, by (\ref{logzeta cov})-(\ref{logzeta cov 2}), may be written
\begin{align}
R_{\chi_{0}}^{\textnormal{vert}}(\alpha,\beta,\sigma)=E\left\{\log\left|L\left(\sigma+i\alpha t,\chi_{0}\right)\right|\log\left|L\left(\sigma+i\beta t,\chi_{0}\right)\right|\right\}\nonumber \\=\begin{cases} 
       \frac{\beta}{2\alpha}\sum_{k=1}^{\infty}\frac{\mathcal{P}_{\chi_{0}}\left(k\left(1+\frac{\alpha}{\beta}\right)\sigma\right)}{k^{2}} \hspace{.1cm}; & \beta|\alpha, \hspace{.42cm} \sigma>0, \hspace{.1cm}\sigma \neq \frac{\beta}{j(\beta+\alpha)}, j \in \mathbb{N}\\
      \frac{1}{2\alpha \beta}\sum_{k=1}^{\infty}\frac{\mathcal{P}_{\chi_{0}}\left(k\left(\beta+\alpha\right)\sigma\right)}{k^{2}} \hspace{.1cm}; & \beta \nmid \alpha, \hspace{.25cm} \sigma>0, \hspace{.1cm}\sigma \neq \frac{1}{j(\beta+\alpha)}, j \in \mathbb{N}
   \end{cases} \nonumber \\=
 \begin{cases} 
       \frac{\beta}{2\alpha}\sum_{p \nmid M}\textnormal{Li}_{2}\left(\frac{1}{p^{\left(1+\frac{\alpha}{\beta}\right)\sigma}}\right)  \hspace{.1cm}; & \beta|\alpha, \hspace{.42cm} \sigma>\frac{\beta}{\beta+\alpha} \\
      \frac{1}{2\alpha \beta}\sum_{p \nmid M}\textnormal{Li}_{2}\left(\frac{1}{p^{(\beta+\alpha)\sigma}}\right) \hspace{.1cm}; & \beta \nmid \alpha, \hspace{.25cm} \sigma>\frac{1}{\beta+\alpha}
   \end{cases}\label{logzeta vert cov}
\end{align}
It is again clear that (\ref{diag covar}) strongly contrasts with $\textnormal{Re}\mathcal{P}_{\chi_{0}}(s)$'s corresponding vertical covariance function (\ref{covar vert}). Similarly to Section \ref{integer diagonal correlation}, to study the correlations implied by (\ref{logzeta vert cov}), we consider $R_{1}^{\textnormal{vert}}(\alpha,\beta,\sigma)$, i.e., the covariance between $\log\left|\zeta\left(\sigma+i\alpha t\right)\right|$ and $\log\left|\zeta\left(\sigma+i\beta t\right)\right|$. We define the correlation similarly to (\ref{corr}) as 
\begin{align}
\rho_{1}^{\textnormal{vert}}(\alpha,\beta,\sigma)
=\frac{R_{1}^{\textnormal{vert}}(\alpha,\beta,\sigma)}{\sqrt{R_{1}^{\textnormal{vert}}(\alpha,\alpha,\sigma)R_{1}^{\textnormal{vert}}(\beta,\beta,\sigma)}} 
=\frac{R_{1}^{\textnormal{vert}}(\alpha,\beta,\sigma)}{R_{1}^{\textnormal{vert}}(1,1,\sigma)}\label{vert cor}
\end{align} 
for $\alpha \geq \beta$ and as $\rho_{1}^{\textnormal{vert}}(\alpha,\beta,\sigma)=\rho_{1}^{\textnormal{vert}}(\beta,\alpha,\sigma)$ for $\alpha<\beta$. We can numerically approximate (\ref{logzeta vert cov})-(\ref{vert cor}) to produce a correlation table like that depicted in Figure \ref{vert cor table fig} for $\alpha=n$ and $\beta=m$ with $n,m \in \mathbb{N}$. This table shows much higher correlation between $\log\left|\zeta\left(\sigma+int\right)\right|$ and $\log\left|\zeta\left(\sigma+im t\right)\right|$ when either $n|m$ or $m|n$. We can explain this similarly to Section \ref{integer diagonal correlation} by first noting from (\ref{prime zeta sum}) and (\ref{logzeta vert cov}) that, as $\alpha \rightarrow \infty$,
\begin{equation}
R_{1}^{\textnormal{vert}}(\alpha,\beta,\sigma)\sim \begin{cases} 
       \frac{\beta}{\alpha}\frac{1}{2^{\left(1+\frac{\alpha}{\beta}\right)\sigma+1}} \hspace{.1cm}; & \beta|\alpha \\
      \frac{1}{\alpha \beta}\frac{1}{2^{\left(\beta+\alpha\right)\sigma+1}} \hspace{.1cm}; & \beta \nmid \alpha
   \end{cases}  \label{logzeta vert cov asy}
\end{equation}
which we may apply in (\ref{logzeta vert cov}) and simplify to show that, as $\alpha \rightarrow \infty$,
\begin{equation}
\rho_{1}^{\textnormal{vert}}(\alpha,\beta,\sigma)\sim\begin{cases} 
     \frac{\beta}{\alpha}2^{\left(1-\frac{\alpha}{\beta}\right)\sigma} \hspace{.1cm}; & \beta|\alpha \\
     \frac{1}{\alpha \beta}2^{\left(2-\beta-\alpha\right)\sigma} \hspace{.1cm}; & \beta \nmid \alpha
   \end{cases} \hspace{.25cm}
=\begin{cases} 
      O\left(2^{-\frac{\alpha}{\beta}\sigma}\right) \hspace{.1cm}; & \beta|\alpha \\
      O\left(2^{-\alpha\sigma}\right) \hspace{.1cm}; & \beta \nmid \alpha
   \end{cases}  \label{vert cor asy}
\end{equation}
The decay rates in (\ref{vert cor asy}) are visible in Figure \ref{vert cor plot fig}. These results show that, along with the nonlocal diagonal correlation described in Section \ref{integer diagonal correlation}, there is potentially high correlation between $\log\left|\zeta\left(\sigma+i\alpha t\right)\right|$ and $\log\left|\zeta\left(\sigma+i\beta t\right)\right|$, depending on whether $\beta|\alpha$ or $\beta \nmid \alpha$. Additionally, as in Section \ref{integer diagonal correlation}, very similar results hold for other principal L-functions, with $2$ in (\ref{logzeta vert cov asy})-(\ref{vert cor asy}) replaced by the minimum prime $p$ such that $p \nmid M$, giving the general result (\ref{nonloc vert corr 1}). For both the diagonal correlations described in Section \ref{integer diagonal correlation} as well as the vertical correlations described in this section, we refer to correlations in the case $\beta|\alpha$ as \textit{resonant} correlations and those in the case $\beta \nmid \alpha$ as \textit{dissonant} correlations.

\begin{figure}
\centering
\includegraphics[width=\linewidth]{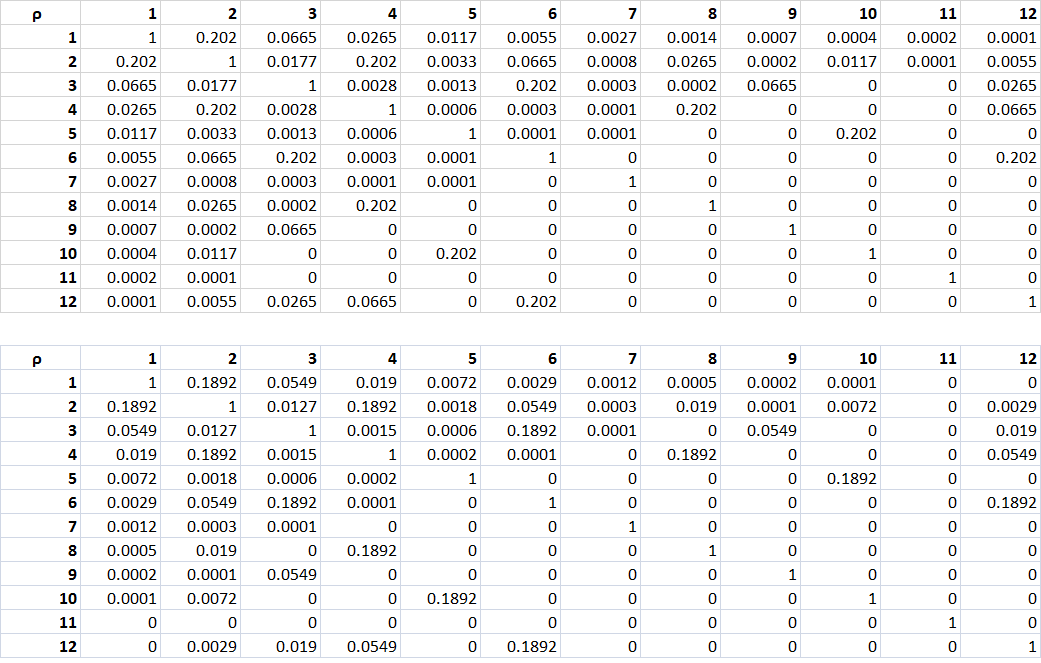}
\caption{Correlation tables produced using (\ref{logzeta vert cov}) and (\ref{vert cor}) to approximate $\rho_{1}^{\textnormal{vert}}(n,m,\sigma)$ to four decimal places with $1 \leq n,m \leq 12 \in \mathbb{N}$ and $\sigma=3/4$ (Top), $\sigma=1$ (Bottom).}
\label{vert cor table fig}
\end{figure}

\begin{figure}
\centering
\includegraphics[width=\linewidth]{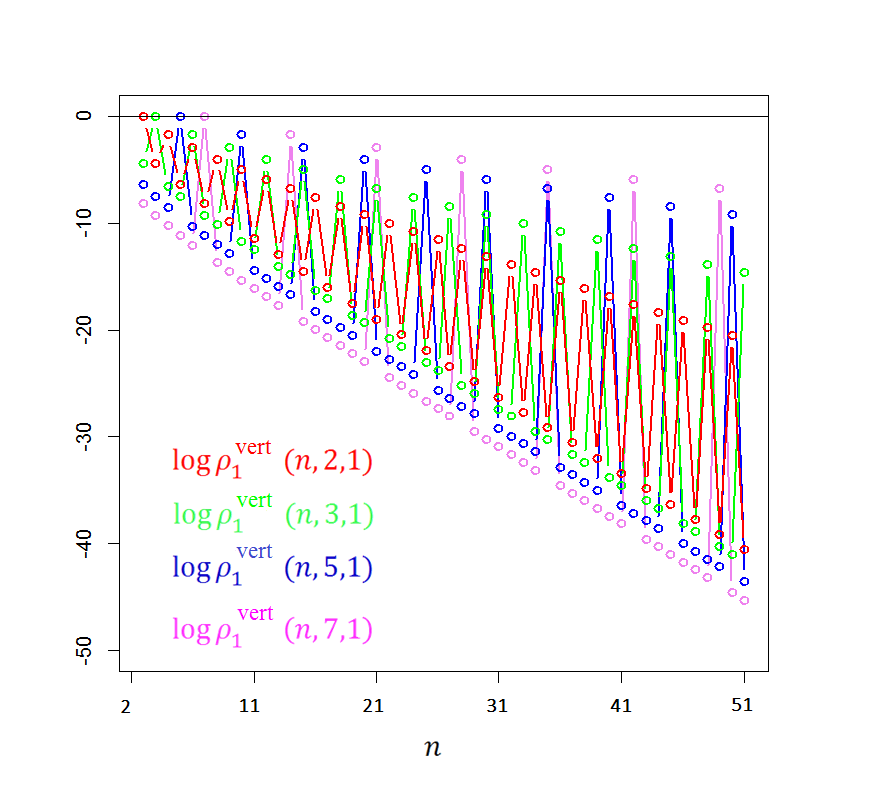}
\caption{Plots of $\log \rho_{1}^{\textnormal{vert}}(n,m,1)$ approximated using (\ref{logzeta vert cov}) and (\ref{vert cor}) with $m=2, 3, 5, 7$ and $2\leq n \leq 51 \in \mathbb{N}$.}
\label{vert cor plot fig}
\end{figure}

\section{M{\"o}bius function solutions, non-divisor sums, and polylogarithms}
In this section we will leverage the above results to prove Theorem \ref{bigthm1}. We will first prove a lemma in the following subsection, which describes the statistical behavior of gaps between the prime-L-function $\textnormal{Re}\mathcal{P}_{\chi_{0}}(s)$ and the log-L-function $\log\left|L\left(s,\chi_{0}\right)\right|$.

\subsection{Gaps between prime-L-functions and log-L-functions}
We have seen that while $\log\left|L\left(s,\chi_{0}\right)\right|$ displays nonlocal diagonal correlation, $\textnormal{Re}\mathcal{P}_{\chi_{0}}(s)$ does not. We will apply this fact to study the statistical behavior of the difference between $\textnormal{Re}\mathcal{P}_{\chi_{0}}(s)$ and $\log\left|L\left(s,\chi_{0}\right)\right|$, i.e., 
\begin{equation}
\varepsilon_{\chi_{0}}\left(\sigma,t\right)=\textnormal{Re}\mathcal{P}_{\chi_{0}}\left(\sigma+it\right)-\log\left|L\left(\sigma+it,\chi_{0}\right)\right|.
\label{error}
\end{equation}
The difference (\ref{error}) contains the contributions to $\log\left|L\left(\sigma+it,\chi_{0}\right)\right|$ that give rise to the nonlocal correlations described in Section \ref{nonlocal corr}. Along with the definition (\ref{error}), we will additionally consider the representation 
\begin{equation}
\varepsilon_{\chi_{0}}\left(\sigma,t\right)=\sum_{n=2}^{\infty}\frac{\mu(n)}{n}\log\left|L\left(n\left(\sigma+it\right),\chi_{0}\right)\right|,
\label{error 2}
\end{equation}
which immediately follows from the analytic continuation (\ref{prime l ac}) and (\ref{obviously}). We will apply (\ref{error})-(\ref{error 2}) to produce two different representations for the mean square of $\varepsilon_{\chi_{0}}\left(\sigma,t \right)$ using Theorem \ref{logzeta cov thm}'s covariance function result.

\begin{lemma}
For $t$ uniformly distributed in $[a,b]$ with $b-a\rightarrow \infty$ and all $\sigma>1/4$,
\begin{equation}
E\left\{\varepsilon^{2}_{\chi_{0}}\left(\sigma,t\right)\right\}=-\sum_{n=2}^{\infty}\frac{\mu(n)}{n^{2}}R_{\chi_{0}}^{\textnormal{diag}}(n,n,\sigma) \label{simp mse}
\end{equation}
and
\begin{align}
E\left\{\varepsilon^{2}_{\chi_{0}}\left(\sigma,t\right)\right\}=\nonumber \\ \sum_{n=2}^{\infty}\frac{\mu(n)}{n^{2}}\left(-\left(\mu(n)+2\right)R_{\chi_{0}}^{\textnormal{diag}}(n,n,\sigma)+2\sum_{\substack{m \nmid n\\ m<n}}\frac{\mu(m)}{m^{2}}R_{\chi_{0}}^{\textnormal{diag}}(nm,nm,\sigma)\right) \label{mse still still less raw}
\end{align} \label{mse thm}
\end{lemma}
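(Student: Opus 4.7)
The plan is to derive both identities by squaring the representations (\ref{error}) and (\ref{error 2}) of $\varepsilon_{\chi_0}(\sigma,t)$, exchanging summation and expectation via absolute-convergence estimates analogous to (\ref{term 1 upper bound})--(\ref{term 2 upper bound}) in the proof of Theorem \ref{logzeta cov thm}, and then applying Lemma \ref{prime l cov lemma} together with Theorem \ref{logzeta cov thm} and the M\"obius vanishing identity (\ref{mobius vanishing}).

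For (\ref{simp mse}), I would use (\ref{error 2}) to write
\begin{equation*}
E\{\varepsilon_{\chi_0}^{2}\} = \sum_{n \geq 2}\frac{\mu(n)}{n}\, E\{\varepsilon_{\chi_0}\log|L(n(\sigma+it),\chi_0)|\},
\end{equation*}
and then use (\ref{error}) for the remaining factor of $\varepsilon_{\chi_0}$. This gives
\begin{equation*}
E\{\varepsilon_{\chi_0}\log|L(n(\sigma+it),\chi_0)|\} = E\{\textnormal{Re}\,\mathcal{P}_{\chi_0}(\sigma+it)\log|L(n(\sigma+it),\chi_0)|\} - R_{\chi_0}^{\textnormal{diag}}(1,n,\sigma).
\end{equation*}
Expanding $\log|L(n(\sigma+it),\chi_0)| = \sum_{k \geq 1}\textnormal{Re}\,\mathcal{P}_{\chi_0}(kn(\sigma+it))/k$ and applying Lemma \ref{prime l cov lemma} to each term, the first summand vanishes because no positive integer $k$ satisfies $kn = 1$ for $n \geq 2$. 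The second summand equals $(1/n)R_{\chi_0}^{\textnormal{diag}}(n,n,\sigma)$ by (\ref{logzeta cov alternate}) with $\alpha = n$, $\beta = 1$. Substituting yields (\ref{simp mse}) immediately.

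For (\ref{mse still still less raw}), I would instead square (\ref{error 2}) directly and apply Theorem \ref{logzeta cov thm} term-by-term to obtain
\begin{equation*}
E\{\varepsilon_{\chi_0}^{2}\} = \sum_{n,m \geq 2}\frac{\mu(n)\mu(m)}{nm}\, R_{\chi_0}^{\textnormal{diag}}(n,m,\sigma).
\end{equation*}
I would split the sum into (i) the diagonal $n = m$, (ii) the off-diagonal $n > m$ with $m \mid n$, and (iii) the off-diagonal $n > m$ with $m \nmid n$, each of (ii) and (iii) doubled by the symmetry $(n,m) \leftrightarrow (m,n)$. The alternate form (\ref{logzeta cov alternate}) converts (ii)'s covariance to $(m/n)R_{\chi_0}^{\textnormal{diag}}(n,n,\sigma)$ and (iii)'s to $(1/nm)R_{\chi_0}^{\textnormal{diag}}(nm,nm,\sigma)$. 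In (ii) the $m$-factors cancel and the divisor sum $\sum_{m \mid n,\, 2 \leq m < n}\mu(m) = -1-\mu(n)$, obtained from (\ref{mobius vanishing}) by excising $m=1$ and $m=n$, combines with the $n=m$ diagonal contribution to produce the compact coefficient $-(\mu(n)+2)\mu(n)/n^{2}$ on $R_{\chi_0}^{\textnormal{diag}}(n,n,\sigma)$. Part (iii) directly yields the weighted non-divisor sum appearing in (\ref{mse still still less raw}).

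The main obstacle will be the bookkeeping in the second derivation: matching the $\alpha \geq \beta$ convention of (\ref{logzeta cov alternate}) to the symmetric splitting, remembering that $m=1$ divides every $n$ (so the non-divisor inner sum is automatically free of $m=1$), and exploiting (\ref{mobius vanishing}) so that the diagonal and $m \mid n$ contributions fuse into the single factor $-(\mu(n)+2)R_{\chi_0}^{\textnormal{diag}}(n,n,\sigma)$ rather than remaining as a residual double sum. The threshold $\sigma > 1/4$ emerges from requiring $\mathcal{P}_{\chi_0}(4\sigma)$, the smallest relevant prime sum in representation (\ref{primedirseries}) (arising when $n=2$), to converge absolutely, thereby legitimizing every series interchange above.
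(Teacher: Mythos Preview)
Your derivation of (\ref{mse still still less raw}) is essentially identical to the paper's: square (\ref{error 2}), split the resulting double sum into diagonal, divisor off-diagonal, and non-divisor off-diagonal pieces, feed each through (\ref{logzeta cov alternate}), and collapse the divisor piece with (\ref{mobius vanishing}). The bookkeeping you flag is exactly what the paper carries out.

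Your derivation of (\ref{simp mse}), however, is genuinely different and cleaner than the paper's. The paper squares (\ref{error}) directly, computes the three resulting expectations to arrive at the closed form $E\{\varepsilon_{\chi_0}^{2}\}=\tfrac{1}{2}\sum_{k\ge 2}\mathcal{P}_{\chi_0}(2k\sigma)/k^{2}$, and only then invokes a separate M\"obius-inversion identity (equations (\ref{reps})--(\ref{useful})) to rewrite this as $-\sum_{n\ge 2}\mu(n)n^{-2}R_{\chi_0}^{\textnormal{diag}}(n,n,\sigma)$. You instead mix the two representations---one factor of $\varepsilon_{\chi_0}$ via (\ref{error 2}), the other via (\ref{error})---so that the M\"obius weights are present from the start and the cross term $E\{\textnormal{Re}\,\mathcal{P}_{\chi_0}(\sigma+it)\log|L(n(\sigma+it),\chi_0)|\}$ vanishes by Lemma~\ref{prime l cov lemma} (since $kn\neq 1$ for $n\ge 2$). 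This bypasses the auxiliary identity (\ref{useful}) entirely. The paper's route has the side benefit of exhibiting the compact intermediate expression $\tfrac{1}{2}\sum_{k\ge 2}\mathcal{P}_{\chi_0}(2k\sigma)/k^{2}$, but your route reaches the stated formula in fewer steps. Both arguments initially require $\sigma$ large enough for the Euler-product expansions to converge and then extend to $\sigma>1/4$ via the domain noted in (\ref{diag covar}); you gloss this continuation step, but so does the paper.
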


\begin{proof}
We first note from (\ref{covar}) that 
\begin{equation}
E\left\{\left(\textnormal{Re}\mathcal{P}_{\chi_{0}}\left(\sigma+it\right)\right)^{2}\right\}=\frac{\mathcal{P}_{\chi_{0}}(2\sigma)}{2}. \label{pzeta var}
\end{equation}
We then square (\ref{error}) and apply the expectation, noting (\ref{pzeta var}) as well as (\ref{logzeta cov}) with $n=m=1$ to write 
\begin{align}
E\left\{\varepsilon_{\chi_{0}}^{2}\left(\sigma,t\right)\right\}=\frac{\mathcal{P}_{\chi_{0}}(2\sigma)}{2}+\frac{1}{2}\sum_{k=1}^{\infty}\frac{\mathcal{P}_{\chi_{0}}(2k\sigma)}{k^{2}} \nonumber \\ -2E\left\{\textnormal{Re}\mathcal{P}_{\chi_{0}}\left(\sigma+it\right)\log\left|L\left(\sigma+it,\chi_{0}\right)\right|\right\}. \label{simp step 1}
\end{align}
We consider $\sigma>1$ so we may apply the Euler product and Taylor expansion for $\log\left|L\left(s,\chi_{0}\right)\right|$ with (\ref{obviously}) to write 
\begin{equation}
\textnormal{Re}\mathcal{P}_{\chi_{0}}\left(\sigma+it\right)\log\left|L\left(\sigma+it,\chi_{0}\right)\right|=\textnormal{Re}\mathcal{P}_{\chi_{0}}\left(\sigma+it\right)\sum_{k=1}^{\infty}\frac{\textnormal{Re}\mathcal{P}_{\chi_{0}}\left(k\left(\sigma+it\right)\right)}{k}. \label{simp pre step 1}
\end{equation}
We then note that (\ref{simp pre step 1}) has the absolute upper bound
\begin{equation}
\mathcal{P}(\sigma)\sum_{k=1}^{\infty}\frac{\mathcal{P}(k\sigma)}{k}<\infty.
\label{simp bound}
\end{equation}
The finite bound (\ref{simp bound}) shows that (\ref{simp pre step 1}) satisfies the condition (\ref{ft cond}). We may therefore apply the expectation to (\ref{simp pre step 1}) term-by-term. We then note from (\ref{covar}) that the only term with nonvanishing expected value is the term corresponding to $k=1$, which gives
\begin{equation}
E\left\{\textnormal{Re}\mathcal{P}_{\chi_{0}}\left(\sigma+it\right)\log\left|L\left(\sigma+it,\chi_{0}\right)\right|\right\}=\frac{\mathcal{P}_{\chi_{0}}(2\sigma)}{2}.  \label{simp step 3}
\end{equation}
Applying (\ref{simp step 3}) in (\ref{simp step 1}) and cancelling terms then gives 
\begin{equation}
E\left\{\varepsilon^{2}_{\chi_{0}}\left(\sigma,t\right)\right\}=\frac{1}{2}\sum_{k=2}^{\infty}\frac{\mathcal{P}_{\chi_{0}}(2k\sigma)}{k^{2}}. \label{simp step 4}
\end{equation}
We then use M{\"o}bius inversion to show that, for any $v \in \mathbb{R}$,
\begin{equation}
\mathcal{P}_{\chi_{0}}(2\sigma)=\sum_{n=1}^{\infty}\frac{\mu(n)}{n^{v}}\sum_{k=1}^{\infty}\frac{\mathcal{P}_{\chi_{0}}(2kn\sigma)}{k^{v}}. \label{reps}
\end{equation}
We expand the right-hand side of (\ref{reps}), cancel terms, and rearrange to write
\begin{equation}
\sum_{k=2}^{\infty}\frac{\mathcal{P}_{\chi_{0}}(2k\sigma)}{k^{v}}=-\sum_{n=2}^{\infty}\frac{\mu(n)}{n^{v}}\sum_{k=1}^{\infty}\frac{\mathcal{P}_{\chi_{0}}(2kn\sigma)}{k^{v}}. \label{useful}
\end{equation}
Dividing (\ref{useful}) by 2, setting $v=2$, comparing to (\ref{simp step 4}), and using (\ref{diag covar}) then gives (\ref{simp mse}).

We next consider (\ref{error 2}) and take the square, giving the series expression 
\begin{align}
\varepsilon^{2}_{\chi_{0}}\left(\sigma,t\right)=\sum_{n=2}^{\infty}\frac{\mu^{2}(n)}{n^{2}}\left(\log\left|L\left(n(\sigma+it),\chi_{0}\right)\right|\right)^{2} \nonumber \\ +2\sum_{n=2}^{\infty}\frac{\mu(n)}{n}\sum_{2\leq m<n}\frac{\mu(m)}{m}\left(\log\left|L\left(n(\sigma+it),\chi_{0}\right)\right|\right)\left(\log\left|L\left(m(\sigma+it),\chi_{0}\right)\right|\right). \label{comp mse step 1}
\end{align}
We note from the Euler product and Taylor expansion that, for any $n\sigma>1$, 
\begin{align}
\log\left|L\left(n\left(\sigma+it\right),\chi_{0}\right)\right|=\sum_{k=1}^{\infty}\frac{1}{k}\sum_{p \nmid M}\frac{\cos\left(nt\log p\right)}{p^{kn\sigma}} \nonumber \\ \leq \sum_{k=1}^{\infty}\frac{1}{k}\sum_{p \nmid M}\frac{1}{p^{kn\sigma}}=\log \left|L\left(n\sigma,\chi_{0}\right)\right| \label{log l bound}
\end{align}
and, as $n\rightarrow \infty$,
\begin{equation}
\log \left|L\left(n\sigma,\chi_{0} \right)\right| \sim \frac{1}{p^{n\sigma}}, \label{log l asy}
\end{equation}
where $p$ is the minimum prime such that $p \nmid M$. Hence the sum of the absolute values of the summands of (\ref{comp mse step 1})'s first series has the upper bound 
\begin{equation}
\sum_{n=2}^{\infty}\frac{1}{n^{2}}\left(\log\left|L\left(n\sigma,\chi_{0}\right)\right|\right)^{2}<\infty, \label{comp mse bound 1}
\end{equation}
while the sum of the absolute values of the summands of (\ref{comp mse step 1})'s second series has the upper bound
\begin{align}
2\sum_{n=2}^{\infty}\frac{1}{n}\sum_{2 \leq m <n}\frac{1}{m}\left(\log\left|L\left(n\sigma,\chi_{0}\right)\right|\right)\left(\log\left|L\left(m\sigma,\chi_{0}\right)\right|\right) \nonumber \\ <2\left(\log\left|L\left(\sigma,\chi_{0}\right)\right|\right)\sum_{n=2}^{\infty}\log\left|L\left(n\sigma,\chi_{0}\right)\right|<\infty. 
\label{comp mse bound 2}
\end{align}
The finite bounds (\ref{comp mse bound 1})-(\ref{comp mse bound 2}) show that (\ref{comp mse step 1}) satisfies the condition (\ref{ft cond}). We may therefore apply the expectation to (\ref{comp mse step 1}) term-by term. We do so and write 
\begin{align}
E\left\{\varepsilon^{2}_{\chi_{0}}\left(\sigma,t\right)\right\}=\sum_{n=2}^{\infty}\frac{\mu^{2}(n)}{n^{2}}R_{\chi_{0}}^{\textnormal{diag}}(n,n,\sigma)+2\sum_{n=2}^{\infty}\sum_{2\leq m<n}\frac{\mu(n)\mu(m)}{nm}R_{\chi_{0}}^{\textnormal{diag}}(n,m,\sigma). \label{mse raw}
\end{align}
We expand the second summation to give
\begin{align}
 E\left\{\varepsilon^{2}_{\chi_{0}}\left(\sigma,t\right)\right\}=\sum_{n=2}^{\infty}\frac{\mu^{2}(n)}{n^{2}}R_{\chi_{0}}^{\textnormal{diag}}(n,n,\sigma) \nonumber \\ +2\sum_{n=2}^{\infty}\frac{\mu(n)}{n}\left(\sum_{\substack{m|n \\2 \leq m<n}}\frac{\mu(m)}{m}R_{\chi_{0}}^{\textnormal{diag}}(n,m,\sigma)+\sum_{\substack{m \nmid n \\ m<n}}\frac{\mu(m)}{m}R_{\chi_{0}}^{\textnormal{diag}}(n,m,\sigma)\right). \label{mse raw expanded}
\end{align}
We then apply (\ref{logzeta cov alternate}) in (\ref{mse raw expanded}) and simplify to give the result
\begin{align}
E\left\{\varepsilon^{2}_{\chi_{0}}\left(\sigma,t\right)\right\}=\sum_{n=2}^{\infty}\frac{\mu^{2}(n)}{n^{2}}R_{\chi_{0}}^{\textnormal{diag}}(n,n,\sigma) \nonumber \\ +2\sum_{n=2}^{\infty}\frac{\mu(n)}{n^{2}}\left(\sum_{\substack{m|n \\ 2\leq m<n}}\mu(m)R_{\chi_{0}}^{\textnormal{diag}}(n,n,\sigma)+\sum_{\substack{m \nmid n \\ m<n}}\frac{\mu(m)}{m^{2}}R_{\chi_{0}}^{\textnormal{diag}}(nm,nm,\sigma)\right) \label{mse less raw}
\end{align}
We next gather the series in (\ref{mse less raw}) to write
\begin{align}
\begin{split}E\left\{\varepsilon^{2}_{\chi_{0}}\left(\sigma,t\right)\right\}=\sum_{n=2}^{\infty}\frac{\mu(n)}{n^{2}}\left[\left(\mu(n)+2\sum_{\substack{m|n\\ 2 \leq m<n}}\mu(m)\right)R_{\chi_{0}}^{\textnormal{diag}}(n,n,\sigma) \right. \\ \left.+2\sum_{\substack{m \nmid n\\ m<n}}\frac{\mu(m)}{m^{2}}R_{\chi_{0}}^{\textnormal{diag}}(nm,nm,\sigma)\right]\end{split} \label{mse still less raw}
\end{align}
We then note that 
\begin{equation}
\mu(n)+2\sum_{\substack{m|n \\ 2\leq m<n}}\mu(m)=\mu(n)+2\left(0-\mu(n)-1\right)=-\mu(n)-2 \label{basic mobius}
\end{equation}
by (\ref{mobius vanishing}), which enables us to simplify (\ref{mse still less raw}) to give (\ref{mse still still less raw}). We lastly make note of the constraints on $\sigma$ for the convergence of (\ref{diag covar})'s $R_{\chi_{0}}^{\textnormal{diag}}(n,n,\sigma)$. Since all of the summands in (\ref{simp mse})-(\ref{mse still still less raw}) involve $R_{\chi_{0}}^{\textnormal{diag}}(n,n,\sigma)$ with $n \geq 2$, we conclude from (\ref{diag covar})'s third line that (\ref{simp mse}) and (\ref{mse still still less raw}) are convergent for all $\sigma>1/4$. This completes the proof. 
\end{proof}
It is clear from the decay rates implied by (\ref{primedirseries}), (\ref{polylog}), and (\ref{diag covar}) that the summations in both (\ref{simp mse}) and (\ref{mse still still less raw}) rapidly converge for $\sigma>1/4$ and hence their results may be numerically approximated. Over the entire investigated range of $\sigma$, approximations using both (\ref{simp mse}) and (\ref{mse still still less raw}) give nearly equivalent numerical results. This is clear from Figure \ref{pzmse fig}, where we depict estimates from (\ref{simp mse}) and (\ref{mse still still less raw}) of (\ref{error})'s root mean square for $\chi=M=1$ and values of $1/4<\sigma\leq 3$. These estimates show that (\ref{error})'s root mean square with $\chi=M=1$ increases with decreasing $\sigma$, demonstrating the greater impact of nonlocal correlations on $\log\left|\zeta\left(\sigma+it\right)\right|$ with smaller $\sigma$. Specific estimates for this root mean square include $\approx .006$ near $\sigma=3$, $\approx .023$ near $\sigma=2$, $\approx.104$ near $\sigma=1$, and $\approx .264$ near $\sigma=1/2$.

\begin{figure}
\centering
\includegraphics[width=\linewidth]{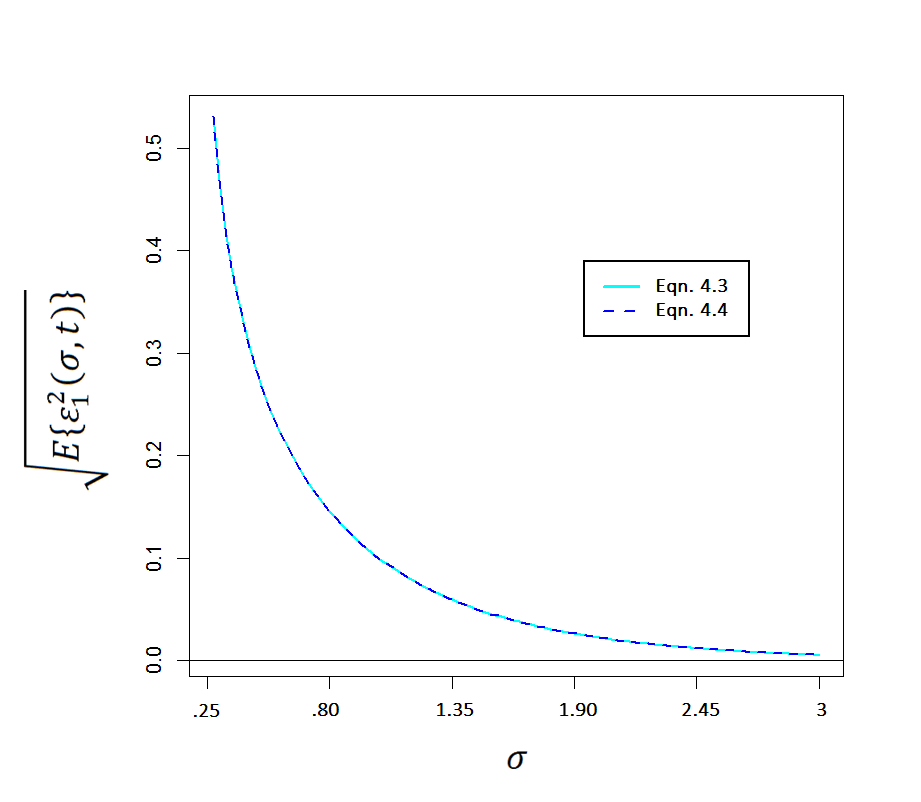}
\caption{A plot depicting the root mean square of (\ref{error}) with $\chi_{0}=M=1$, approximated using (\ref{simp mse}) and (\ref{mse still still less raw}) with (\ref{logzeta cov 2}) for $1/4< \sigma \leq 3$.}
\label{pzmse fig}
\end{figure}

\subsection{Proof of Theorem \ref{bigthm1}}
With Lemma \ref{mse thm} we are now ready to prove Theorem \ref{bigthm1}

\begin{proof}
We apply Lemma \ref{mse thm} and set (\ref{simp mse}) equal to (\ref{mse still still less raw}). Then gathering summations gives
\begin{equation}
\sum_{n=2}^{\infty}\frac{\mu(n)}{n^{2}}\left(1+\mu(n)\right)R_{\chi_{0}}^{\textnormal{diag}}(n,n,\sigma)=2\sum_{n=2}^{\infty}\frac{\mu(n)}{n^{2}}\sum_{\substack{m \nmid n\\ m<n}}\frac{\mu(m)}{m^{2}}R_{\chi_{0}}^{\textnormal{diag}}(nm,nm,\sigma) \label{oooo}
\end{equation}
We then note that $\mu(n)(1+\mu(n))$ is only nonzero when $\mu(n)=1$, in which case $\mu(n)(1+\mu(n))=2$. We additionally note that no $n \leq 2$ has any non-divisors $m<n$. We apply these facts to (\ref{oooo}) to give the identity 
\begin{equation}
\sum_{\substack{n>1 \\ \mu(n)=1}}\frac{1}{n^{2}}R_{\chi_{0}}^{\textnormal{diag}}(n,n,\sigma)=\sum_{n}\frac{\mu(n)}{n^{2}}\sum_{\substack{m \nmid n\\ m<n}}\frac{\mu(m)}{m^{2}}R_{\chi_{0}}^{\textnormal{diag}}(nm,nm,\sigma). \label{oooo 2}
\end{equation}

We next consider two principal Dirichlet characters $\chi_{0}$ and $\chi_{0}'$ with respective moduli $M$ and $M'$ such that, for some prime $p \nmid M$, $M'=pM$. Then by (\ref{diag covar})'s third line,
\begin{equation}
R_{\chi_{0}}^{\textnormal{diag}}(n,n,\sigma)-R_{\chi_{0}'}^{\textnormal{diag}}(n,n,\sigma)=\frac{1}{2}\textnormal{Li}_{2}\left(\frac{1}{p^{2n\sigma}}\right) \label{hellyeah}
\end{equation}
for all $\sigma>\frac{1}{2n}$ . We then apply (\ref{oooo 2}) for both $\chi_{0}$ and $\chi_{0}'$ and take the difference of the two results to give
\begin{align}
\sum_{\substack{n>1 \\ \mu(n)=1}}\frac{1}{n^{2}}\left(R_{\chi_{0}}^{\textnormal{diag}}(n,n,\sigma)-R_{\chi_{0}'}^{\textnormal{diag}}(n,n,\sigma)\right)\nonumber \\ =\sum_{n}\frac{\mu(n)}{n^{2}}\sum_{\substack{m \nmid n\\ m<n}}\frac{\mu(m)}{m^{2}}\left(R_{\chi_{0}}^{\textnormal{diag}}(nm,nm,\sigma)-R_{\chi_{0}'}^{\textnormal{diag}}(nm,nm,\sigma)\right). \label{oooo diff}
\end{align}
Then, by (\ref{hellyeah}), (\ref{oooo diff}) becomes
\begin{equation}
\sum_{\substack{n>1 \\ \mu(n)=1}}\frac{1}{n^{2}}\textnormal{Li}_{2}\left(\frac{1}{p^{2n\sigma}}\right)=\sum_{n}\frac{\mu(n)}{n^{2}}\sum_{\substack{m \nmid n\\ m<n}}\frac{\mu(m)}{m^{2}}\textnormal{Li}_{2}\left(\frac{1}{p^{2nm\sigma}}\right).\label{oooo final}
\end{equation}
We note that the the first nonzero terms in (\ref{oooo diff})-(\ref{oooo final})'s series are those corresponding to $n=6$ on the left-hand side and $n=3$ on the right-hand side. We also note that $p$ may be any prime, and, from (\ref{hellyeah})'s constraints on $\sigma$, $\sigma>1/12$. Applying these facts in (\ref{oooo final}) implies 
\begin{equation}
\sum_{\substack{n>1\\ \mu(n)=1}}\frac{1}{n^{2}}\textnormal{Li}_{2}\left(\frac{1}{x^{n}}\right)=\sum_{n}\frac{\mu(n)}{n^{2}}\sum_{\substack{m \nmid n\\ m<n}}\frac{\mu(m)}{m^{2}}\textnormal{Li}_{2}\left(\frac{1}{x^{nm}}\right)\label{oooo final 2}
\end{equation}
for all $x>2^{1/6}$.

We next show that both sides of (\ref{dabigone}) and hence both sides of (\ref{oooo final 2}) are uniformly continuous on $x\geq x_{0}$, where $x_{0}>1$ is arbitrary. We begin with the left-hand side of (\ref{dabigone}) and show that it is uniformly convergent on $x \geq x_{0}$ for any $v \in \mathbb{R}$ by noting that
\begin{equation}
\left|\sum_{\substack{n>N \\ \mu(n)=1}}\frac{1}{n^{v}}\textnormal{Li}_{v}\left(\frac{1}{x^{n}}\right)\right| < \sum_{n>N}\frac{1}{n^{v}}\textnormal{Li}_{v}\left(\frac{1}{x^{n}}\right) \leq  \sum_{n>N}\frac{1}{n^{v}}\textnormal{Li}_{v}\left(\frac{1}{x_{0}^{n}}\right). \label{easy uniform}
\end{equation}
Since, by (\ref{polylog}), the series on the far right-hand side is convergent for any finite $v \in \mathbb{R}$, (\ref{easy uniform}) shows uniform convergence of (\ref{dabigone})'s left-hand side. We then proceed to showing that the right-hand side of (\ref{dabigone}) is uniformly convergent on $x \geq x_{0}$ and $v\geq 1$ by noting that 
\begin{align}
\left|\sum_{n> N}\frac{\mu(n)}{n^{v}}\sum_{\substack{m \nmid n \\ m<n}}\frac{\mu(m)}{m^{v}}\textnormal{Li}_{v}\left(\frac{1}{x^{nm}}\right)\right| \leq \sum_{n> N}\frac{1}{n^{v}}\sum_{\substack{m \nmid n \\ m<n}}\frac{1}{m^{v}}\textnormal{Li}_{v}\left(\frac{1}{x^{nm}}\right) \nonumber \\
< \sum_{n> N}\frac{1}{n^{v}}\sum_{m\leq n}\textnormal{Li}_{v}\left(\frac{1}{x^{nm}}\right) < \sum_{n> N}\frac{1}{n^{v-1}}\textnormal{Li}_{v}\left(\frac{1}{x^{n}}\right) \leq \sum_{n> N}\frac{1}{n^{v-1}}\textnormal{Li}_{v}\left(\frac{1}{x_{0}^{n}}\right). \label{a little harder uniform}
\end{align}
Similarly to above, the convergence of the series on (\ref{a little harder uniform})'s lower-right proves uniform convergence of (\ref{dabigone})'s right-hand side for $v\geq 1$. We lastly show the uniform convergence of (\ref{dabigone})'s right-hand side on $x \geq x_{0}$ and $v\leq 0$ by noting
\begin{align}
\left|\sum_{n> N}\mu(n)n^{|v|}\sum_{\substack{m \nmid n \\ m<n}}\mu(m)m^{|v|}\textnormal{Li}_{v}\left(\frac{1}{x^{nm}}\right)\right| \leq \sum_{n>N}n^{|v|}\sum_{\substack{m \nmid n \\ m<n}}m^{|v|}\textnormal{Li}_{v}\left(\frac{1}{x^{nm}}\right) \label{hardest uniform} \\ \nonumber 
<\sum_{n> N}n^{2|v|}\sum_{m\leq n}\textnormal{Li}_{v}\left(\frac{1}{x^{nm}}\right)<\sum_{n>N}n^{2|v|+1}\textnormal{Li}_{v}\left(\frac{1}{x^{n}}\right)\leq \sum_{n> N}n^{2|v|+1}\textnormal{Li}_{v}\left(\frac{1}{x_{0}^{n}}\right)
\end{align}
Again, the convergence of the series on (\ref{hardest uniform})'s lower-right proves uniform convergence of (\ref{dabigone})'s right-hand side for $v\leq 0$. We have thus shown that both sides of (\ref{dabigone}) for all $v \in \mathbb{R}$ and hence both sides of (\ref{oooo final 2}) are uniformly convergent on $x \geq x_{0}$. Since $x_{0}>1$ is arbitrary, this implies that both sides of (\ref{dabigone}) for all $v \in \mathbb{R}$, and hence both sides of (\ref{oooo final 2}), are uniformly convergent on $x>2^{1/6}$.

We now suppose that (\ref{dabigone}) is true for some $v \in \mathbb{R}$. We then consider the integral 
\begin{align}
\int_{\log x}^{\infty}\sum_{\substack{n>1 \\ \mu(n)=1}}\frac{1}{n^{v}}\textnormal{Li}_{v}\left(\frac{1}{u^{n}}\right)du =\int_{\log x}^{\infty}\sum_{n}\frac{\mu(n)}{n^{v}}\sum_{\substack{m \nmid n\\ m<n}}\frac{\mu(m)}{m^{v}}\textnormal{Li}_{v}\left(\frac{1}{u^{nm}}\right)du \label{oooo int step 1}
\end{align}
with $x>2^{1/6}$. Since both sides of (\ref{dabigone}) are uniformly convergent on $x>2^{1/6}$, we may integrate both sides of (\ref{oooo int step 1}) term-by-term. We then note the following integral of the polylogarithm:
\begin{equation}
\int_{\log x}^{\infty}\textnormal{Li}_{v}\left(\frac{1}{u^{\alpha}}\right)du=\frac{1}{\alpha}\textnormal{Li}_{v+1}\left(\frac{1}{x^{\alpha}}\right) \label{li int}
\end{equation}
Applying (\ref{li int}) with $\alpha=n$ on (\ref{oooo int step 1})'s left-hand side and applying (\ref{li int}) with $\alpha=nm$ on (\ref{oooo int step 1})'s right-hand side gives
\begin{equation}
\sum_{\substack{n >1 \\ \mu(n)=1}}\frac{1}{n^{v+1}}\textnormal{Li}_{v+1}\left(\frac{1}{x^{n}}\right)=\sum_{n}\frac{\mu(n)}{n^{v+1}}\sum_{\substack{m \nmid n\\ m<n}}\frac{\mu(m)}{m^{v+1}}\textnormal{Li}_{v+1}\left(\frac{1}{x^{nm}}\right). \label{oooo int step 2}
\end{equation}
Therefore (\ref{dabigone}) implies (\ref{oooo int step 2}). This fact combined with (\ref{oooo final 2}) proves (\ref{dabigone}) for all $v \in \mathbb{Z}$ such that $v\geq 2$ on $x>2^{1/6}$.

We again suppose (\ref{dabigone}) is true for some $v \in \mathbb{R}$, set $u=\log x$ with $x>2^{1/6}$ and consider the derivative
\begin{equation}
\frac{\partial}{\partial u}\sum_{\substack{n >1 \\ \mu(n)=1}}\frac{1}{n^{v}}\textnormal{Li}_{v}\left(\frac{1}{e^{nu}}\right)=\frac{\partial}{\partial x}\sum_{n}\frac{\mu(n)}{n^{v}}\sum_{\substack{m \nmid n\\ m<n}}\frac{\mu(m)}{m^{v}}\textnormal{Li}_{v}\left(\frac{1}{e^{nmu}}\right).
\label{oooo deriv step 1}
\end{equation}
We differentiate both sides of (\ref{oooo deriv step 1}) term-by-term and note the following derivative of the polylogarithm:
\begin{equation}
\frac{\partial}{\partial u}\textnormal{Li}_{v}\left(\frac{1}{e^{\alpha u}}\right)=-\alpha \textnormal{Li}_{v-1}\left(\frac{1}{e^{\alpha u}}\right) \label{li deriv}
\end{equation}
Applying (\ref{li deriv}) with $\alpha=n$ on (\ref{oooo deriv step 1})'s left-hand side, applying (\ref{li deriv}) with $\alpha=nm$ on (\ref{oooo deriv step 1})'s right-hand side, and resubstituting $\log x$ for $u$ gives
\begin{equation}
\sum_{\substack{n >1 \\ \mu(n)=1}}\frac{1}{n^{v-1}}\textnormal{Li}_{v-1}\left(\frac{1}{x^{n}}\right)=\sum_{n}\frac{\mu(n)}{n^{v-1}}\sum_{\substack{m \nmid n\\ m<n}}\frac{\mu(m)}{m^{v-1}}\textnormal{Li}_{v-1}\left(\frac{1}{x^{nm\sigma}}\right). \label{oooo deriv step 2}
\end{equation}
Since both sides of (\ref{oooo deriv step 2}) are uniformly convergent on $x>2^{1/6}$, term-by-term differentiation to evaluate (\ref{oooo deriv step 1}) was valid. Therefore (\ref{dabigone}) implies (\ref{oooo deriv step 2}). This fact combined with (\ref{oooo final 2}) proves (\ref{dabigone}) for all $v \in \mathbb{Z}$ such that $v \leq 2$ on $x>2^{1/6}$. This proves (\ref{dabigone}) for all $v\in \mathbb{Z}$ with $x>2^{1/6}$.

We next note that $\textnormal{Li}_{v}(z)$ has the property
\begin{equation}
\frac{\partial^{l}}{\partial v^{l}}\textnormal{Li}_{v}\left(z\right)=\left(-1\right)^{l}\sum_{k=1}^{\infty}\frac{\log^{l}(k)}{k^{v}}z^{k}.
\end{equation}
We then show that 
\begin{equation}
\sum_{\substack{n>1 \\ \mu(n)=1}}\frac{1}{n^{v}}\sum_{k=1}^{\infty}\frac{\log^{l}(k)}{k^{v}x^{kn}} \label{new step lhs}
\end{equation}
is uniformly convergent on $v\geq 0$ for any  $l \in \mathbb{R}$ and $x>2^{1/6}$ by noting that
\begin{equation}
\sum_{\substack{n>N \\ \mu(n)=1}}\frac{1}{n^{v}}\sum_{k=1}^{\infty}\frac{\log^{l}(k)}{k^{v}x^{kn}}<\sum_{n>N}\frac{1}{n^{v}}\sum_{k=1}^{\infty}\frac{\log^{l}(k)}{k^{v}x^{kn}}\leq \sum_{n>N}\sum_{k=1}^{\infty}\frac{\log^{l}(k)}{x^{kn}}. \label{new step lhs uc}
\end{equation}
We similarly show that 
\begin{equation}
\sum_{n}\frac{\mu(n)}{n^{v}}\sum_{\substack{m \nmid n \\ m<n}}\frac{\mu(m)}{m^{v}}\sum_{k=1}^{\infty}\frac{\log^{l}(k)}{k^{v}x^{knm}}
\label{new step rhs}
\end{equation}
is uniformly convergent on $v\geq 0$ for any $l \in \mathbb{R}$ and $x>2^{1/6}$ by noting
\begin{align}
\sum_{n>N}\frac{1}{n^{v}}\sum_{\substack{m \nmid n \\ m<n}}\frac{1}{m^{v}}\sum_{k=1}^{\infty}\frac{\log^{l}(k)}{k^{v}x^{knm}}\leq \sum_{n>N}\sum_{m<n}\sum_{k=1}^{\infty}\frac{\log^{l}(k)}{x^{knm}}<\sum_{n>N}n\sum_{k=1}^{\infty}\frac{\log^{l}(k)}{x^{kn}}.
\label{new step rhs uc}
\end{align}

By the uniform convergence of (\ref{new step lhs}) and (\ref{new step rhs}) we may apply the Taylor series expansion to write
\begin{equation}
\sum_{\substack{n>1 \\ \mu(n)=1}}\frac{1}{n^{v_{b}}}\textnormal{Li}_{v_{b}}\left(\frac{1}{x^{n}}\right)=\sum_{l=0}^{\infty}\frac{(-1)^{l}}{l!}\left(v_{b}-v_{a}\right)^{l}\sum_{\substack{n>1 \\ \mu(n)=1}}\frac{1}{n^{v_{a}}}\sum_{k=1}^{\infty}\frac{\log^{l}(k)}{k^{v_{a}}x^{kn}} \label{pwr series lhs}
\end{equation}
and
\begin{align}
\sum_{n}\frac{\mu(n)}{n^{v_{b}}}\sum_{\substack{m \nmid n \\ m<n}}\frac{\mu(m)}{m^{v_{b}}}\textnormal{Li}_{v_{b}}\left(\frac{1}{x^{nm}}\right) \nonumber \\ =\sum_{l=0}^{\infty}\frac{(-1)^{l}}{l!}\left(v_{b}-v_{a}\right)^{l}\sum_{n}\frac{\mu(n)}{n^{v_{a}}}\sum_{\substack{m\nmid n \\m<n}}\frac{\mu(m)}{m^{v_{a}}}\sum_{k=1}^{\infty}\frac{\log^{l}(k)}{k^{v_{a}}x^{knm}}\label{pwr series rhs}
\end{align}
for some $v_{a},v_{b}\in \mathbb{Z}$. Since (\ref{dabigone}) with $v\in \mathbb{Z}$ is true, (\ref{pwr series lhs}) and (\ref{pwr series rhs}) are equal, which implies the equality of the $l$th summands in (\ref{pwr series lhs}) and (\ref{pwr series rhs}), proving that 
\begin{equation}
\sum_{\substack{n>1 \\ \mu(n)=1}}\frac{1}{n^{v}}\sum_{k=1}^{\infty}\frac{\log^{l}(k)}{k^{v}x^{kn}}=\sum_{n}\frac{\mu(n)}{n^{v}}\sum_{\substack{m \nmid n \\ m<n}}\frac{\mu(m)}{m^{v}}\sum_{k=1}^{\infty}\frac{\log^{l}(k)}{k^{v}x^{knm}}
\label{new step done}
\end{equation}
for all $v\in \mathbb{Z}$, $l \in \mathbb{N}$, and $x>2^{1/6}$. We then note that one may apply (\ref{pwr series lhs})-(\ref{pwr series rhs}) for any $v_{a} \in \mathbb{Z}$ and any $v_{b} \in \mathbb{R}$, which, with (\ref{new step done}), shows that (\ref{dabigone}) holds for all $v \in \mathbb{R}$ and $x>2^{1/6}$. We lastly note that, by (\ref{gen x}), (\ref{dabigone})'s left-hand side satisfies
\begin{align}
\sum_{\substack{n >1 \\ \mu(n)=1}}\frac{1}{n^{v}}\textnormal{Li}_{v}\left(\frac{1}{x^{n}}\right)=\frac{1}{x}-\sum_{\substack{n>1 \\ \mu(n) \neq 1}}\frac{\mu(n)}{n^{v}}\textnormal{Li}_{v}\left(\frac{1}{x^{n}}\right)-\textnormal{Li}_{v}\left(\frac{1}{x}\right) \nonumber \\
=\frac{1}{x}-\textnormal{Li}_{v}\left(\frac{1}{x}\right)+\sum_{\substack{n>1 \\ \mu(n)=-1}}\frac{1}{n^{v}}\textnormal{Li}_{v}\left(\frac{1}{x^{n}}\right). \label{gen y}
\end{align}
Setting the last line of (\ref{gen y}) equal to (\ref{dabigone})'s right-hand side and rearranging proves (\ref{dabigone2}).
\end{proof}

We close by noting that the major insight enabling Theorem \ref{bigthm1}'s proof is the result (\ref{oooo 2}), which describes an important symmetry between resonant and dissonant nonlocal correlations in $\log\left|L\left(s,\chi_{0}\right)\right|$. This symmetry involves sums over integers with an even number of distinct prime factors and sums over the non-divisors of the integers, leading to the general result (\ref{dabigone}). The result (\ref{dabigone2}) is then an immediate corollary of (\ref{dabigone}). Potential directions for future research include the study of correlation structures in non-principal L-functions as well as the study of higher order correlation structures, e.g., three or four-point correlations, in $\textnormal{Re}\mathcal{P}_{\chi}(s)$ and $\log\left|L\left(s,\chi\right)\right|$.

\appendix

\section{On statistical independence}
\label{app independence}
The characteristic function of a random variable $X$ is defined
\begin{equation}
\varphi_{X}(\lambda)=E\left\{e^{i\lambda X}\right\}. \label{chf}
\end{equation} 
We suppose $t$ is uniformly distributed in $[a,b]$ with $b-a \rightarrow \infty$ and consider the following sum over some set of primes $p$: 
\begin{equation}
X(t)=\sum_{p}a_{p}e^{-i\left(\alpha_{p}t\log p+\theta_{p}\right)}, \label{primedirseries m}
\end{equation}
where $a_{p}, \theta_{p} \in \mathbb{R}$ and $\alpha_{p} \in \mathbb{Q}$. We substitute (\ref{primedirseries m})'s real part into (\ref{chf}) to write
\begin{equation}
\varphi_{X(t)}(\lambda)=E\left\{ \prod_{p}\exp\left( i \lambda a_{p}\cos\left(\alpha_{p}t \log p+\theta_{p}\right)\right) \right\}. \label{app1step00}
\end{equation}
We then expand (\ref{app1step00}) using the Bessel function identity 
\begin{equation}
e^{ix \cos y}=\sum_{n=-\infty}^{\infty} i^{n}J_{n}\left(x\right)e^{i n y}
\label{besselidcos}
\end{equation}
where $J_{n}(.)$ is the $n$th-order Bessel function of the first kind. This gives
\begin{align}
\begin{split}
\varphi_{X(t)}\left(\lambda \right)=E\left\{\sum_{n_{1},n_{2},...}\left(i^{n_{1}+n_{2}+...}J_{n_{1}}\left(\lambda a_{p_{1}}\right)J_{n_{2}}\left(\lambda a_{p_{2}}\right)... \right. \right. \\ \left. \left. \times e^{i\left(n_{1}\theta_{p_{1}}+n_{2}\theta_{p_{2}}+...\right)}e^{i t\left(n_{1}\alpha_{p_{1}}\log p_{1}+n_{2}\alpha_{p_{2}}\log p_{2}+...\right)}\right)\right\} \end{split}
\label{big}
\end{align}
The exponential terms on (\ref{big})'s far right-hand side are unit circle rotations with $t$. Therefore taking the expected value will cause all terms to vanish except those for which 
\begin{equation}
n_{1}\alpha_{p_{1}}\log p_{1}+n_{2}\alpha_{p_{2}}\log p_{2}+n_{3}\alpha_{p_{3}}\log p_{3}+...=0. \label{zero}
\end{equation}
However, by unique-prime-factorization, the $\log p$'s are linearly independent over the rational numbers. Therefore the only solution to (\ref{zero}) is given by 
\begin{equation}
n_{1}=n_{2}=n_{3}=...=0. \label{soln}
\end{equation}
This simplifies (\ref{big}) to give 
\begin{equation}
\varphi_{X(t)}\left(\lambda \right)=\prod_{p} J_{0}\left(\lambda a_{p}\right). \label{result}
\end{equation}
We next note from (\ref{chf}) and (\ref{besselidsin}) that the characteristic function for a single summand in (\ref{primedirseries m})'s real part, $a_{p}\cos(\alpha_{p}t \log p+\theta_{p})$, is given by 
\begin{equation}
\varphi_{p}\left(\lambda\right)=J_{0}\left(\lambda a_{p}\right). \label{littleresult}
\end{equation}
Therefore, by (\ref{result}) and (\ref{littleresult}), 
\begin{equation}
\varphi_{X(t)}\left(\lambda \right)=\prod_{p} \varphi_{p}\left(\lambda\right). \label{independence}
\end{equation}
This shows that (\ref{primedirseries m})'s summands are independent. Essentially equivalent reasoning using the identity
\begin{equation}
e^{ix \sin \phi}=\sum_{n=-\infty}^{\infty}J_{n}\left(x\right)e^{i n\phi} \label{besselidsin}
\end{equation}
gives equivalent results for (\ref{primedirseries m})'s imaginary part. This completes the proof sketch. For further details and complete proof, apply results in \cite{laurincikas} (pg. 36, Cor. 5.2 pg. 38, Eqn. 6.4 pg. 41, Cor. 6.7 pg. 43, Cor. 6.8 pg. 44).

\end{document}